\documentclass[12pt]{elsarticle}
\usepackage[utf8]{inputenc}
\usepackage{amsmath}
\usepackage{amssymb}
\usepackage{newtxtext}
\usepackage{newtxmath}
\usepackage{microtype}
\usepackage{xcolor}
\usepackage{graphicx}
\usepackage{enumerate}
\usepackage{bbold}
\makeatletter
\def\ps@pprintTitle{%
 \let\@oddhead\@empty
 \let\@evenhead\@empty
 \def\@oddfoot{\centerline{\thepage}}%
 \let\@evenfoot\@oddfoot}
\makeatother

\newtheorem{thm}{Theorem}[section]

\newtheorem{lemma}{Lemma}[section]

\newtheorem{cor}{Corollary}[section]

\usepackage{amssymb}

\begin{document}
\begin{frontmatter}

\title{From Sidelnikov-Welch bounds to projection constants}

 \author[label1]{Beata~Deregowska}
 
 \author[label4]{Barbara~Lewandowska \footnote{B.L. is partially supported by National Science Center (NCN) grant no. 2025/09/X/ST1/01604 }}

 \address[label1]{Institute of Mathematics\\
University of the National Education Commission, Podchorazych~2, Krakow, 30-084, Poland}
\address[label4]{Faculty of Mathematics and Computer Science\\
Jagiellonian University, Lojasiewicza~6, Krakow, 30-348, Poland}



\begin{abstract}
In the paper, we prove a recursive version of the weighted Sidelnikov-Welch
inequality for real and complex unit vectors. Unlike the classical
form, which gives a direct lower bound for a fixed even power sum, our
inequality relates two consecutive even power sums. Iteration yields
the usual weighted Sidelnikov-Welch bound.\\
~\\
We apply this estimate to maximal relative projection constants.
If $\mathbb{K}^m$ admits a maximal equiangular tight frame with
$M_{\mathbb K}$ vectors, then for every integer $k\geq1$,
$$
\lambda_{\mathbb K}(kM_{\mathbb K}-m,kM_{\mathbb K})
=
\lambda_{\mathbb K}(m)-\frac{2m}{kM_{\mathbb K}}+1.
$$
Moreover, the maximal value is realized by an equiangular tight frame
when $k=1$ and by a biangular tight frame when $k\geq2$.
\end{abstract}
\begin{keyword}

maximal absolute projection constant \sep 
maximal relative projection constant \sep 
equiangular tight frames \sep
biangular tight frames \sep
Sidelnikov-Welch bound\sep
(t,t)-designs



\MSC 41A65 \sep 41A44 \sep 46B20  \sep 42C15
\end{keyword}
\end{frontmatter}
\section{Introduction}

We begin by recalling the basic notions concerning projection constants. Let $X$ be a Banach space over $\mathbb{K},$ where $\mathbb{K}=\mathbb{R}$ or $\mathbb{K}=\mathbb{C}.$ Let $Y\subset X$ be a subspace.
By $\mathcal{P}(X,Y)$ denote the set of all linear and continuous projections from $X$ onto $Y$,
recalling that an operator $P \colon X \rightarrow Y$ is called a \textit{projection} onto $Y$ if $P|_Y={\rm Id}_Y.$ 
We define the \textit{relative projection constant} of subspace $Y$  and space $X$ by
\begin{equation*}
\lambda(Y,X) :=\inf\lbrace\|P\|:\;P\in\mathcal{P}(X,Y)\rbrace.
\end{equation*}
Notice that the set $\mathcal{P}(X,Y)$ can be empty (e.g. $\mathcal{P}(\ell_{\infty},c_{0})$). Then we will assume that $\lambda(Y,X)=\infty.$
Now we can define the \textit{absolute projection constant} of $Y$ by
\begin{equation}
\label{DefMAPC}
\lambda(Y) :=\sup\lbrace\lambda(Y,X):Y\subset X\rbrace.
\end{equation}
A natural question is how large the absolute projection constant can be among all $m$-dimensional Banach spaces. This leads to the notion of the maximal absolute projection constant,
\begin{equation*}
\lambda_{\mathbb{K}}(m) :=\sup \lbrace\lambda(Y):\; \dim(Y)=m \rbrace.
\end{equation*}
The problem of determining or estimating maximal absolute projection constants has attracted considerable attention in the theory of Banach spaces over the past decades.
By the Kadec--Snobar theorem (see \cite{KS}),
we have $\lambda_{\mathbb{K}}(m)\leq \sqrt{m}$. In 1960, B. Grünbaum conjectured that $\lambda_\mathbb{R}(2)=\frac{4}{3}$ (see \cite{G}), and in 2010, B. Chalmers and G. Lewicki proved it (see \cite{CL}). In 2019, G. Basso delivered an alternative proof of this conjecture (see \cite{B}). For many years it was widely believed that determining $\lambda_{\mathbb{K}}(m)$ for dimensions greater than two was beyond reach. However, connecting this problem with different structures of vectors led to some partial results and showed a possible way to solve it.

The connection with frame theory is based on the observation that the maximal absolute projection constant can be computed by considering finite-dimensional subspaces of $\ell_\infty$ (see, e.g., \cite[III.B.5]{W}).  Therefore, it can be defined as a supremum of \textit{maximal relative projection constants} for $N \ge m,$ given by 
\begin{equation*}
\lambda_{\mathbb{K}}(m,N):=\sup\lbrace \lambda(Y, \ell_\infty^{(N)}(\mathbb{K})):\; \dim(Y)=m \textrm{ and } Y\subset \ell_\infty^{(N)}(\mathbb{K})\rbrace.
\end{equation*}   
The crucial tool in our investigation is the following theorem, originally stated in \cite[Theorem 2.2]{CLe} and reproved in \cite[Theorem 1]{FS}.
\begin{thm}\label{lammbda}
For integers $N \ge m$, we have
\begin{align}\label{lambda1}
    \lambda_{\mathbb{K}}(m,N)&=\max\bigg\lbrace \sum_{i,j=1}^N t_it_j|U^* U|_{ij}:t\in\mathbb{R}_+^N,\;\|t\|=1,U\in \mathbb{K}^{m\times N},\; UU^*={\rm I}_m \bigg\rbrace 
\end{align}

\end{thm}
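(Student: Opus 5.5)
The plan is to represent every admissible projection as a matrix acting on $\ell_\infty^{(N)}$, compute the norm of that matrix, and then apply a duality argument to the resulting minimax problem. Fix an $m$-dimensional subspace $Y\subset\ell_\infty^{(N)}(\mathbb{K})$. A projection onto $Y$ is an $N\times N$ matrix $P$ with $P^2=P$ and range $Y$. Its operator norm on $\ell_\infty^{(N)}$ is the maximal row $\ell_1$-norm, $\|P\|=\max_i\sum_j|P_{ij}|$.

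Next I write this maximum as a supremum over probability vectors. Put $s_i=t_i^2$, so that $s$ ranges over the simplex $\Delta$. Then
\begin{equation*}
\|P\|=\max_{s\in\Delta}\sum_{i,j}s_i|P_{ij}|=\max_{s\in\Delta}\max_{\varepsilon}\,\mathrm{Re}\,\mathrm{tr}(D_s P E),
\end{equation*}
where $D_s=\mathrm{diag}(s)$ and $E$ ranges over matrices with entries $|E_{ji}|\le1$. Consequently
\begin{equation*}
\lambda(Y,\ell_\infty^{(N)})=\min_{P}\max_{s,E}\mathrm{Re}\,\mathrm{tr}(D_sPE).
\end{equation*}
The set of projections onto $Y$ is an affine subspace: $P=P_0+Q$ with $Q$ satisfying $QY=0$ and $\mathrm{ran}\,Q\subset Y$. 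The objective is bilinear in $P$ and in $(s,E)$ jointly after the substitution $F=D_sE$, and the set of such $F$ (rows bounded by $s_i$) is compact and convex. A minimax theorem therefore lets me swap the min and the max. The inner minimum over the affine set is $-\infty$ unless $F$ annihilates the directions $Q$. When $F$ does annihilate them, the value equals $\mathrm{Re}\,\mathrm{tr}(P_0F)$. Here $P_0$ can be taken to be the orthogonal projection $UU^*$-type matrix $U^*U$, where the rows of $U$ form an orthonormal basis of $Y$, because the constraint makes the value independent of the choice of $P_0$.

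The main step, and the obstacle I expect, is identifying the optimal dual $F$. I claim the maximiser has the form $F_{ij}=t_it_j\,\overline{\mathrm{sgn}((U^*U)_{ij})}$ after an admissible change of basis; this parallels the Chalmers–Lewicki characterisation of minimal projections. To establish it I would use the freedom to replace $Y$ by $D_tY$. Indeed, the supremum over all $Y$ is taken anyway, and rescaling coordinates by positive weights $t_i$ corresponds to applying $D_t$ to vectors. This converts $\sum_{i,j}s_i|P_{ij}|$ into a symmetric form in $t$. Concretely, I would first prove the inequality $\lambda(Y,\ell_\infty)\le\max_t\sum t_it_j|(U^*U)_{ij}|$ over all $U$ with $UU^*=I$. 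For this I use the projection $P=D_t^{-1}\,\tilde U^*\tilde U\,D_t$ built from a weighted orthonormalisation $\tilde U$ of $Y$ with respect to the optimal weight $t$. Its row sums satisfy $\sum_j|P_{ij}|=t_i^{-1}\sum_j|\tilde G_{ij}|t_j$, where $\tilde G=\tilde U^*\tilde U$. These row sums equal the common value $\sum_{i,j}t_it_j|\tilde G_{ij}|$ by the Lagrange (KKT) stationarity conditions for maximising the quadratic form over the unit sphere. The coordinates with $t_i=0$ need a separate limiting argument, which is the delicate part.

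Finally I prove the reverse inequality: for any $U$ and $t$, the subspace $Y$ spanned by the rows of $UD_t^{-1}$ (or its perturbation) has projection constant at least $\sum t_it_j|G_{ij}|$. The dual certificate $F_{ij}=t_it_j\overline{\mathrm{sgn}\,G_{ij}}$ gives this lower bound. To see this, note that any projection $P$ onto $Y$ satisfies $\mathrm{tr}(PF')=\mathrm{tr}(P_0F')$ for $F'$ supported in the appropriate form. This yields $\|P\|\ge\sum_{i,j}t_it_j|G_{ij}|$ via $\sum_i t_i^2\sum_j|P_{ij}|\ge|\mathrm{tr}(PF)|$. Taking the supremum over $Y$ and combining both inequalities gives \eqref{lambda1}. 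The maximum is attained because the feasible set of pairs $(t,U)$ is compact.
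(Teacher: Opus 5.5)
The paper gives no proof of this statement; it quotes it from Chalmers--Lewicki and Foucart--Skrzypek, so your sketch has to stand on its own. Your minimax step is sound. It yields $\lambda(Y,\ell_\infty^{(N)})=\max\{|\mathrm{tr}(F|_Y)| : FY\subset Y,\ F=ED_s,\ |E_{ji}|\le 1,\ s \text{ a probability vector}\}$. Both inequalities you then sketch, however, have genuine gaps.

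\textbf{The reverse inequality is false as stated.} Take $N=3$, $m=2$. Let the rows of $U$ be an orthonormal basis of $\ker(1,1,1)$, so $G=U^*U=I-\tfrac13 J$ with $J$ the all-ones matrix, and let $t=\tfrac13(2,2,1)$. Then $\sum_{i,j}t_it_j|G_{ij}|=\tfrac13\bigl(1+(\sum_i t_i)^2\bigr)=\tfrac{34}{27}$. The rows of $UD_t^{-1}$ span $\ker(2,2,1)$. With $f=\tfrac15(2,2,1)$ and $z=\tfrac1{13}(15,15,5)$, the projection $I-zf^{T}$ onto this subspace has every absolute row sum equal to $\tfrac{16}{13}<\tfrac{34}{27}$.

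The failing step is the one you pass over with ``$F'$ supported in the appropriate form''. $\mathrm{tr}(PF')$ is independent of the projection $P$ only if $F'$ maps the subspace into itself. Put $\Sigma_{ij}=\mathrm{sgn}\,G_{ij}$ and $A:=D_t\Sigma D_t$, which is Hermitian. Then $\Sigma D_t^2$ leaves $D_t^{-1}\mathrm{ran}\,U^*$ invariant if and only if $\mathrm{ran}\,U^*$ is $A$-invariant, which fails here.

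The repair is to change the subspace, not the certificate. Since $\mathrm{tr}(UAU^*)=\sum_{i,j}t_it_j|G_{ij}|$, Ky Fan's maximum principle gives $\mathrm{tr}(A|_W)\ge\sum_{i,j}t_it_j|G_{ij}|$, where $W$ is spanned by eigenvectors of the $m$ largest eigenvalues of $A$. For $t>0$, the matrix $F=\Sigma D_t^2=D_t^{-1}AD_t$ leaves $Y=D_t^{-1}W$ invariant and satisfies $|F_{ji}|\le t_i^2$. Hence every projection $P$ onto $Y$ has $\|P\|\ge|\mathrm{tr}(FP)|=\mathrm{tr}(A|_W)$, and zero weights follow by continuity in $t$. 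At a maximiser $(t,U)$, optimality in $U$ forces $\mathrm{ran}\,U^*$ to be such a subspace, which is why your certificate is right there, but that has to be proved.

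\textbf{The upper bound has a gap of a different kind.} Your $\tilde G=\tilde G(t)$ is the orthogonal projection onto $D_tY$, so it moves with $t$. All row sums of $D_t^{-1}\tilde GD_t$ equal $\mu=\sum_{i,j}t_it_j|\tilde G_{ij}|$ exactly when the Perron/KKT relation $(|\tilde G|t)_i=\mu t_i$ holds. That relation holds when $t$ maximises $\tau\mapsto\sum_{i,j}\tau_i\tau_j|\tilde G_{ij}|$ with $\tilde G$ frozen. You would then need $t$ to be both the weight producing $\tilde G$ and the Perron vector of $|\tilde G(t)|$, a fixed-point problem you never address. If instead $t$ maximises $t\mapsto\sum_{i,j}t_it_j|\tilde G(t)_{ij}|$, the stationarity equations pick up terms from $\partial\tilde G/\partial t$ and no longer say the row sums coincide. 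Zero weights are also left open.

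None of this is needed, because your dual formula gives the bound directly. Let $F=ED_s$ with $s_i=t_i^2$, $|E_{ji}|\le 1$ and $FY\subset Y$, and set $A=D_t(ED_t)$. Then $F=(ED_t)D_t$ and $A$ have the same characteristic polynomial. A Schur form of $A$ (a real one if $\mathbb{K}=\mathbb{R}$) therefore gives an $m$-dimensional $A$-invariant subspace $W=\mathrm{ran}\,\tilde U^*$, with $\tilde U\tilde U^*=I_m$, carrying exactly the eigenvalues of $F|_Y$. Writing $H=\tilde U^*\tilde U$, this gives $|\mathrm{tr}(F|_Y)|=|\mathrm{tr}(\tilde UA\tilde U^*)|=\bigl|\sum_{i,j}t_iE_{ij}t_jH_{ji}\bigr|\le\sum_{i,j}t_it_j|H_{ij}|$. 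This holds for every $t\ge 0$, with no KKT or fixed-point argument.
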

The matrix condition $UU^*={\rm I}_m$ in Theorem~\ref{lammbda} is precisely the Parseval frame condition. Thus, determining maximal relative projection constants can be reformulated as an optimization problem over Parseval frames. We begin by recalling the relevant terminology. A system of vectors $(u_1,\dots, u_N)$ in $\mathbb{K}^m$ is called a {\it tight frame} if there exists a constant $\alpha >0$ such that one of the following equivalent conditions holds:
\begin{itemize}
    \item  $\|x\|^2=\alpha\sum_{k=1}^{N}|\langle x, u_k \rangle|^2$  \; for all $x\in \mathbb{K}^m.$ 
     \item  $x=\alpha\sum_{k=1}^{N}\langle x, u_k \rangle u_k$  \; for all $x\in \mathbb{K}^m.$ 
    \item $UU^*=\frac{1}{\alpha}{\rm I_m}$, where $U$ is the matrix with columns $u_1,\dots, u_N.$ 
\end{itemize}   
If $\alpha= 1,$ then a tight frame is called {\it Parseval frame.}
\noindent The system $(u_1,\dots, u_N)$ of unit vectors in $\mathbb{K}^m$ is called an {\it equiangular tight frame} ETF$(m,N)$ if it is tight and
the value of $|\langle u_i, u_j\rangle|$ is constant over all $i\neq j.$
It is well known (see e.g. \cite[Theorem 5.7]{FR}) that if $(u_1,\ldots u_N )$ is an ETF$(m,N)$ then
\begin{equation}\label{Welch Bound}
|\langle u_i,u_j \rangle|=\sqrt{\frac{N-m}{m(N-1)}}
\qquad \textrm{ for all } i,j\in [1:N],\;i\neq j,
\end{equation}
and the constant $\alpha$ is also determined and is equal to $\frac{m}{N}.$
Notice that the existence of an equiangular tight frame consisting of $N$ unit vectors in $\mathbb{K}^m$ is equivalent to the existence of a Parseval frame such that
\begin{equation}\label{ETFangel}
    (U^*U)_{ii}=\frac{m}{N} \; \textrm{ and }\; |U^*U|_{ij}=\frac{m}{N}\sqrt{\frac{N-m}{m(N-1)}} \;\textrm{ for }\; i\neq j,
\end{equation}
where $i,j\in [1:N].$ Moreover, the quantity in \eqref{lambda1} for this Parseval frame with equal weights $t_i$ meets the upper bound for the maximal relative projection constant, given in \cite{KLL}. We present it in the form stated in \cite [ Theorem 5 ]{ FS}, where also an easier proof of this result was provided.
\begin{thm}\label{SF}

For integers $N\geq m$, the maximal relative projection constant $\lambda_{\mathbb{K}}(m,N)$ is upper bounded by
$$
\delta_{m,N} := \frac{m}{N} \left( 1 + \sqrt{\frac{(N-1)(N-m)}{m}} \right).
$$
Moreover, the following properties are equivalent:
\begin{enumerate}[i)]
\item There is an equiangular tight frame consisting of $N$ vectors in $\mathbb{K}^m,$
\item $\lambda_\mathbb{K}(m,N)=\tfrac{m}{N}\left(1 +\sqrt{\tfrac{(N-1)(N-m)}{m}} \right).$
\end{enumerate}
\end{thm}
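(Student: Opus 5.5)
We start from the formula of Theorem~\ref{lammbda}. Fix a unit vector $t\in\mathbb{R}_+^N$ and a Parseval frame matrix $U\in\mathbb{K}^{m\times N}$, and write $G=U^*U$. Then $G$ is the orthogonal projection onto an $m$-dimensional subspace of $\mathbb{K}^N$. Hence $\sum_i G_{ii}=m$, and since $G^2=G$ we get $\sum_j |G_{ij}|^2=G_{ii}$ for each row $i$. We split the quantity to be bounded into diagonal and off-diagonal parts:
\[
\sum_{i,j} t_it_j|G_{ij}| = \sum_i t_i^2 G_{ii} + \sum_{i\neq j} t_it_j|G_{ij}|.
\]

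For the off-diagonal part we use Cauchy--Schwarz on the $N(N-1)$ pairs $i\neq j$:
\[
\sum_{i\neq j} t_it_j|G_{ij}| \le \Big(\sum_{i\neq j} t_i^2t_j^2\Big)^{1/2}\Big(\sum_{i\neq j}|G_{ij}|^2\Big)^{1/2}.
\]
Here $\sum_{i\ne j}|G_{ij}|^2=\sum_i G_{ii}-\sum_i G_{ii}^2 = m-\sum_i G_{ii}^2$, and $\sum_{i\neq j}t_i^2t_j^2 = 1-\sum_i t_i^4$.

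Set $s=(t_i^2)_i$ and $g=(G_{ii})_i$, and let $a=\sum_i t_i^4=\|s\|^2$ and $b=\sum_i G_{ii}^2=\|g\|^2$. Then $\sum_i s_i=1$, $\sum_i g_i=m$, and the total is bounded by
\[
\langle s,g\rangle + \sqrt{(1-a)(m-b)}.
\]
By Cauchy--Schwarz $\langle s,g\rangle\le\sqrt{ab}$. We also have the constraints $a\ge 1/N$ and $b\ge m^2/N$.

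The main obstacle is bounding $\langle s,g\rangle$ well enough. The naive bound $\sqrt{ab}$ loses too much, because it ignores that both vectors have prescribed sums. So we subtract the means. Write $s=\frac1N\mathbb 1+s'$ and $g=\frac mN\mathbb 1+g'$, where $s',g'\perp\mathbb 1$. Then
\[
\langle s,g\rangle=\frac mN+\langle s',g'\rangle\le \frac mN+\sqrt{(a-\tfrac1N)(b-\tfrac{m^2}{N})}.
\]
Now put $x=a-\frac1N\in[0,1-\frac1N]$ and $y=b-\frac{m^2}N\in[0,m-\frac{m^2}N]$. The bound becomes
\[
\frac mN+\sqrt{xy}+\sqrt{\big(1-\tfrac1N-x\big)\big(m-\tfrac{m^2}N-y\big)}.
\]
Apply Cauchy--Schwarz to the two-dimensional vectors $(\sqrt x,\sqrt{1-\frac1N-x})$ and $(\sqrt y,\sqrt{m-\frac{m^2}N-y})$. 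The sum of the two square roots is at most $\sqrt{(1-\frac1N)(m-\frac{m^2}N)}=\frac1N\sqrt{m(N-1)(N-m)}$. Adding $\frac mN$ gives exactly $\delta_{m,N}$, which proves the upper bound.

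For the equivalence, (i)$\Rightarrow$(ii) is a direct evaluation. Take the Parseval frame from \eqref{ETFangel} with $t_i=1/\sqrt N$. The value is
\[
\frac1N\Big(m+N(N-1)\frac mN\sqrt{\tfrac{N-m}{m(N-1)}}\Big)=\delta_{m,N},
\]
and the maximum in Theorem~\ref{lammbda} is attained.

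For (ii)$\Rightarrow$(i), take a maximizer, which exists because the maximum is attained, and trace the equality cases. Equality in the off-diagonal Cauchy--Schwarz forces $|G_{ij}|=c\,t_it_j$ for $i\ne j$. Equality in the last step, together with the mean-subtracted Cauchy--Schwarz, forces $g'$ and $s'$ to be proportional with matching proportion constants.

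The delicate case is when $x=0$ is not forced a priori. I would first try to show $t$ is constant. If $s'\neq0$, the equality conditions tie $G_{ii}$ affinely to $t_i^2$ and $|G_{ij}|$ to $t_it_j$. The row identity $\sum_j|G_{ij}|^2=G_{ii}$ then holds for every $i$ only if $t$ is constant, so $s'=0$. Once $t$ is constant, $x=0$ forces $y=0$ when $m<N$, so $G_{ii}=m/N$, and $|G_{ij}|=c/N$ is constant off the diagonal. The normalized columns $\sqrt{N/m}\,u_i$ then form an ETF$(m,N)$. The case $N=m$ is trivial.
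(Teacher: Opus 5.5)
The paper does not prove this theorem; it quotes it from \cite{KLL} and \cite[Theorem~5]{FS}. Your argument therefore has to stand on its own.

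Your proof of the bound $\lambda_{\mathbb K}(m,N)\le\delta_{m,N}$ is correct and complete: the off-diagonal Cauchy--Schwarz step, centering $s$ and $g$, and the two-dimensional Cauchy--Schwarz step all check out. Your evaluation for (i)$\Rightarrow$(ii) is also correct.

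The gap is in (ii)$\Rightarrow$(i). The decisive claim, that the row identity $\sum_j|G_{ij}|^2=G_{ii}$ holds for all $i$ only if $t$ is constant, is asserted rather than verified. As stated it is false for $m=1$. There $G=uu^*$ for a unit $u\in\mathbb K^N$, the objective equals $\bigl(\sum_i t_i|u_i|\bigr)^2$, and every $t=(|u_1|,\dots,|u_N|)$ is a maximizer with value $\delta_{1,N}=1$. Such a $t$ satisfies all your equality conditions ($G_{ii}=t_i^2$, $|G_{ij}|=t_it_j$) and the row identity, yet need not be constant. The theorem survives only because $N$ unimodular scalars trivially form an ETF in $\mathbb K^1$, and you have to say this separately. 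You also skip the degenerate equality cases of the off-diagonal Cauchy--Schwarz step: the relation $|G_{ij}|=c\,t_it_j$ is forced only when $\sum_it_i^4<1$ and $\sum_iG_{ii}^2<m$.

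Here is how to close the gap for $2\le m<N$.

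First, $\delta_{m,N}>1$ there, since this is equivalent to $m(N-1)>N-m$. A coordinate vector $t$ or a diagonal $G$ gives an objective at most $\max_iG_{ii}\le 1$, so neither can occur at a maximizer. Hence $|G_{ij}|=c\,t_it_j$ for $i\ne j$, with $c^2=(m-b)/(1-a)$.

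Next, equality in the two-dimensional step gives $y=\kappa^2x$ with $\kappa^2=\frac{m(N-m)}{N-1}$. Therefore $m-b=\kappa^2(1-a)$, so $c=\kappa$. Equality in the centered step gives $g'=\kappa s'$. Consequently $G_{ii}=\alpha+\kappa t_i^2$ with $\alpha=(m-\kappa)/N$, and the row identity becomes
\[
\alpha^2-\alpha+\kappa\,(2\alpha+\kappa-1)\,t_i^2=0,\qquad i\in[1:N].
\]

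It remains to show the coefficient of $t_i^2$ is nonzero. We have $N(2\alpha+\kappa-1)=2m-N+\kappa(N-2)$. Setting this to zero and squaring yields $(m-1)(m-N+1)=0$. The case $m=N-1\ge2$ gives $\kappa=1$ and $2m-N+\kappa(N-2)=2(N-2)\neq0$, so it is excluded. Hence the coefficient is nonzero, all $t_i$ are equal, $x=y=0$, and \eqref{ETFangel} follows as you describe.

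For $m=1$ one gets $\kappa=1$ and $\alpha=0$, so the displayed identity is vacuous. This is exactly the counterexample above.
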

When no ETF of $N$ vectors exists in $\mathbb K^m$, determining $\lambda_{\mathbb K}(m,N)$ is substantially more difficult. An important additional family of exact values arises in dimensions admitting ETFs with the largest possible number of vectors, usually referred to as maximal ETFs.
In 1994, H. K\"onig and N. Tomczak-Jaegermann stated the following estimation.
\begin{thm}[stated in \cite{KT}; proved in \cite{BB2}]\label{nMPC}
Let $ m>1.$ Then
\begin{enumerate}[i)]
    \item $\lambda_\mathbb{R}(m) \leq \frac{2}{m+1}\left(1+\frac{m-1}{2}\sqrt{m+2}\right)\,$ 
    \item $ \lambda_\mathbb{C}(m)\leq \frac{1}{m}\left(1+(m-1)\sqrt{m+1}\right).$ 
\end{enumerate}
\end{thm}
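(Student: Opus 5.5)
Since $\lambda_{\mathbb K}(m)=\sup_N \lambda_{\mathbb K}(m,N)$, it suffices to bound $\lambda_{\mathbb K}(m,N)$ uniformly in $N$. By Theorem~\ref{lammbda}, fix a Parseval frame $U$ with columns $u_1,\dots,u_N$ and a unit vector $t\in\mathbb R_+^N$; we must bound $\sum_{i,j}t_it_j|\langle u_i,u_j\rangle|$.

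First separate the diagonal. Write $a_i=\|u_i\|^2$, so $\sum_i a_i=m$ (trace of $U^*U$). Then
$$\sum_{i,j}t_it_j|\langle u_i,u_j\rangle| = \sum_i t_i^2a_i+\sum_{i\neq j}t_it_j|\langle u_i,u_j\rangle|.$$
To control the off-diagonal part we cannot use the ETF angle, since $N$ may exceed the maximal ETF size. Instead we use the quadratic ($t=2$, i.e.\ $(2,2)$-design) Sidelnikov--Welch inequality, $\sum_{i,j}|\langle u_i,u_j\rangle|^4\ge \frac{2}{m(m+1)}\bigl(\sum_i a_i^2\bigr)^2$ in the real case, respectively $\frac{1}{m(m+1)}$ in the complex case. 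The key device is the pointwise inequality $|x|\le \alpha+\beta x^2$ with $\alpha,\beta>0$, which is tangent at the Welch-type angle $x_0$ of a maximal ETF: $x_0^2=1/(m+2)$ in the real case ($N=m(m+1)/2$) and $x_0^2=1/(m+1)$ in the complex case ($N=m^2$). A more robust alternative is the bound $|x|\le c_1x^2+c_2-c_3(x^2-x_0^2)^2$-type polynomial majorants, combined with the identity $\sum_{i,j}|\langle u_i,u_j\rangle|^2=m$ (Parseval) and the fourth-moment inequality. Concretely, I would first normalize by setting $v_i=u_i/\|u_i\|$ and $g_{ij}=|\langle v_i,v_j\rangle|$, and weight $w_i=t_i\sqrt{a_i}$. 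The off-diagonal sum becomes $\sum_{i\neq j}w_iw_j g_{ij}$. Cauchy--Schwarz gives $\le (\sum w_iw_j)^{1/2}\dots$, which is too crude. So instead I apply $g\le \frac{1}{2x_0}(g^2+x_0^2)$, getting
$$\sum_{i\ne j}w_iw_jg_{ij}\le \frac{1}{2x_0}\Bigl(\sum_{i\ne j}w_iw_jg_{ij}^2+x_0^2\sum_{i\neq j}w_iw_j\Bigr).$$
Each of the two resulting sums is then estimated via the frame identities: $\sum_{i,j}a_ia_j g_{ij}^2=m$, together with $\sum w_i^2\le \max a_i$.

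The main obstacle is that the weights $t_i$ are not tied to $a_i$. With $\|t\|=1$ and the identity $\sum_i a_i=m$, the natural extremizer has $t_i\propto\sqrt{a_i}$, but proving this requires an eigenvalue argument. View $A_{ij}=|\langle u_i,u_j\rangle|$ as a nonnegative matrix and bound its top eigenvalue. For this I would use the Perron vector $t$ and the eigen-equation $\lambda t_i=\sum_j A_{ij}t_j$, multiplied and summed against suitable weights so that the Parseval identities and the $(2,2)$ moment bound enter. Since Theorem~\ref{nMPC} is stated as proved in \cite{BB2}, the fallback is to cite that proof. The remaining routine step is optimizing the resulting expression in the single parameter $\sum a_i^2$ (or $N$), and checking that equality holds at the maximal ETF. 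In that case it reproduces $\delta_{m,N}$ of Theorem~\ref{SF} with $N=m(m+1)/2$, respectively $N=m^2$, which simplify to the stated values $\frac{2}{m+1}\bigl(1+\frac{m-1}{2}\sqrt{m+2}\bigr)$ and $\frac1m\bigl(1+(m-1)\sqrt{m+1}\bigr)$.
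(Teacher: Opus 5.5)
Your proposal has a genuine gap. The fourth-moment inequality is announced but never enters the estimate, and the chain you actually carry out cannot reach the stated constant. Write $w_i=t_i\|u_i\|$, $g_{ij}=|\langle v_i,v_j\rangle|$, $Q=\sum_{i,j}w_iw_jg_{ij}^2$, $s=\sum_iw_i$, $D=\sum_iw_i^2$. Applying the tangent-line bound $g\le\frac{1}{2x_0}(g^2+x_0^2)$ off the diagonal gives
\[
\sum_{i,j}t_it_j|\langle u_i,u_j\rangle|\le-\frac{(1-x_0)^2}{2x_0}\,D+\frac{Q}{2x_0}+\frac{x_0}{2}\,s^2 .
\]
Now take $t_i=\|u_i\|/\sqrt m$ in any Parseval frame. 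Then $Q=1$ and $s^2=m$, and for large equal-norm frames $D=\frac1m\sum_i\|u_i\|^4=\frac mN\to0$. So this chain yields nothing better than $\frac{m+1}{\sqrt{m+2}}$ (real) or $\frac{2m+1}{2\sqrt{m+1}}$ (complex); for $m=2$ in the real case that is $3/2>4/3$. Optimizing $x_0$ only recovers the Kadec--Snobar bound $\sqrt m$, because second moments do not see $N$. Checking that the target equals $\delta_{m,M_{\mathbb K}}$ only shows sharpness, since $\delta_{m,N}$ is larger for $N>M_{\mathbb K}$.

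The quartic inequality you quote is also wrong. The constants are $\frac{3}{m(m+2)}$ (real) and $\frac{2}{m(m+1)}$ (complex). In the form $\sum|\langle u_i,u_j\rangle|^4\ge C(\sum_ia_i^2)^2$ it does not involve $t$ at all; it has to be applied to the unit vectors $v_i$ with weights $w_i$. Finally, the Perron-vector plan is not an argument, and falling back on the original source is not a proof.

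The missing idea is the quartic majorant you list as the ``more robust alternative''; this is the paper's route (Lemma~\ref{BukhCox} combined with \eqref{mn}). Let $0\le g\le1$ and let $\varphi$ be the maximal-ETF angle, $\varphi^2=\frac1{m+2}$ (real) or $\frac1{m+1}$ (complex). Then $(g-\varphi)^2=\frac{(g^2-\varphi^2)^2}{(g+\varphi)^2}\ge\frac{(g^2-\varphi^2)^2}{(1+\varphi)^2}$, that is,
\[
g\le\frac{g^2+\varphi^2}{2\varphi}-\frac{(g^2-\varphi^2)^2}{2\varphi(1+\varphi)^2}.
\]
Equality holds at $g=\varphi$ and at $g=1$, so the diagonal need not be split off. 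Multiply by $w_iw_j\ge0$ and sum over all $i,j$. Then use Theorem~\ref{ddineq} for fourth powers (its case $t=2$) on the $v_i$ with weights $w_i$: $\sum_{i,j}w_iw_jg_{ij}^4\ge cQ$, where $c=\frac3{m+2}$ (real) or $\frac2{m+1}$ (complex). This gives
\[
2\varphi\sum_{i,j}t_it_j|\langle u_i,u_j\rangle|\le\Bigl(1+\frac{2\varphi^2-c}{(1+\varphi)^2}\Bigr)Q+\varphi^2\Bigl(1-\frac{\varphi^2}{(1+\varphi)^2}\Bigr)s^2 ,
\]
with both coefficients positive. Your ``obstacle'' about how $t$ relates to $\|u_i\|$ then disappears. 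Cauchy--Schwarz together with $\sum_j|\langle u_i,u_j\rangle|^2=\|u_i\|^2$ gives $Q\le\sum_it_i^2\le1$, and $s^2\le m$. Substituting these yields exactly the two stated bounds.

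Using the classical weighted Welch bound $\sum_{i,j}w_iw_jg_{ij}^4\ge c_2(m,\mathbb K)s^2$ in place of $cQ$ gives the same final constant. So your intended tool does suffice once it is correctly weighted and normalized.
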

\noindent Observe that the upper bound in Theorem \ref{nMPC} is equal to $\delta_{m, \frac{m(m+1)}{2}} $ in the real case and $\delta_{m, m^2} $ in the complex case. The number of vectors in an ETF cannot exceed $\frac{m(m+1)}{2}$ in the real case and $m^2$ in the complex case (see, e.g., \cite[Theorem 5.10]{FR}). So if there exists the maximal  ETF in $\mathbb{K}^m$, the upper bound is realized. 

\begin{thm}[\cite{BB2}]\label{jakis}
Let $ m>1.$
\begin{enumerate}[i)]
    \item  If there exists a maximal ETF in $\mathbb{R}^m$ then $\lambda_\mathbb{R}(m) = \frac{2}{m+1}\left(1+\frac{m-1}{2}\sqrt{m+2}\right)\,$ 
    \item  If there exists a maximal ETF in $\mathbb{C}^m$ then $ \lambda_\mathbb{C}(m) = \frac{1}{m}\left(1+(m-1)\sqrt{m+1}\right).$ 
\end{enumerate}
\end{thm}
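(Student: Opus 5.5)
The plan is to squeeze $\lambda_{\mathbb K}(m)$ between two equal quantities. The upper bound is Theorem~\ref{nMPC}, which we may take as given. For the lower bound we use $\lambda_{\mathbb K}(m)\ge \lambda_{\mathbb K}(m,N)$ for every $N\ge m$. This holds because $\lambda_{\mathbb K}(m)$ is the supremum of the maximal relative projection constants over $N$; equivalently, each $Y\subset\ell_\infty^{(N)}$ with $\dim Y=m$ satisfies $\lambda(Y)\ge\lambda(Y,\ell_\infty^{(N)})$ directly from \eqref{DefMAPC}. So it suffices to exhibit a single $N$ with $\lambda_{\mathbb K}(m,N)$ equal to the right-hand side of Theorem~\ref{nMPC}.

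Take $N=M_{\mathbb K}$, the maximal ETF size: $N=\frac{m(m+1)}{2}$ in the real case and $N=m^2$ in the complex case. By hypothesis an ETF$(m,N)$ exists in $\mathbb K^m$. Implication i)$\Rightarrow$ ii) of Theorem~\ref{SF} then gives $\lambda_{\mathbb K}(m,N)=\delta_{m,N}$.

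Concretely, one can also see this from Theorem~\ref{lammbda}. Rescale the ETF to a Parseval frame as in \eqref{ETFangel} and take $t_i=1/\sqrt N$. The objective then evaluates to
\[
\frac1N\Big(N\cdot\frac mN+N(N-1)\frac mN\sqrt{\tfrac{N-m}{m(N-1)}}\Big)=\delta_{m,N}.
\]

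It remains to check the identity $\delta_{m,N}=$ bound in Theorem~\ref{nMPC}, which the paper has already observed. In the real case, with $N=m(m+1)/2$, we have $N-1=\frac{(m+2)(m-1)}{2}$ and $N-m=\frac{m(m-1)}{2}$. Hence $\sqrt{(N-1)(N-m)/m}=\frac{m-1}{2}\sqrt{m+2}$ and $m/N=\frac{2}{m+1}$. In the complex case, with $N=m^2$, we have $(N-1)(N-m)/m=(m-1)^2(m+1)$ and $m/N=1/m$.

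Combining, $\lambda_{\mathbb K}(m)\ge\lambda_{\mathbb K}(m,N)=\delta_{m,N}\ge\lambda_{\mathbb K}(m)$, so equality holds throughout. There is no real obstacle here. The only delicate point is the justification that $\lambda_{\mathbb K}(m)\ge\lambda_{\mathbb K}(m,N)$, which follows immediately from the definitions. The substantive input, the upper bound of Theorem~\ref{nMPC}, is assumed.
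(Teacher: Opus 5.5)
Your proposal is correct and is essentially the paper's own argument: the paper observes that the bound of Theorem~\ref{nMPC} equals $\delta_{m,M_{\mathbb K}}$, and that when a maximal ETF exists, Theorem~\ref{SF} gives $\lambda_{\mathbb K}(m)\ge\lambda_{\mathbb K}(m,M_{\mathbb K})=\delta_{m,M_{\mathbb K}}$, so the upper bound is attained. Your checks of the identities for $N=\frac{m(m+1)}{2}$ and $N=m^2$, and of $\lambda_{\mathbb K}(m)\ge\lambda_{\mathbb K}(m,N)$ from the definitions, are accurate.
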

There are numerous examples of complex maximal ETFs (see, e.g., \cite{FM}). In fact, Zauner's conjecture asserts that such a frame exists in every complex dimension \cite{Z}. The situation is markedly different in the real case. A maximal real ETF can exist only for $m=2$, $m=3$, or
$
m=(2k+1)^2-2,
$ so the admissible dimensions are already very sparse. Maximal real ETFs are currently known only for $m=2,3,7,$ and $23$. Moreover, the next two arithmetically admissible dimensions, $m=47$ and $m=79$, have been ruled out (see \cite{BMV}).

Outside the above mentioned cases, exact computations of $\lambda(m, N)$ are rare. The main isolated examples are
$$
\lambda_{\mathbb R}(3,5)=\frac{5+4\sqrt2}{7}
\qquad\text{and}\qquad
\lambda_{\mathbb R}(4,6)=\frac53,
$$
and both required substantial and technically involved arguments (see,\cite{B, CLe}).

In this paper, we show that if $\mathbb{K}^m$ admits a maximal ETF consisting of $M_{\mathbb K}$ vectors, then, for every integer $k\geq 1$,

$$
\lambda_\mathbb{K}(kM_\mathbb{K}-m,kM_\mathbb{K})
=
\lambda_{\mathbb{K}}(m)-\frac{2m}{kM_\mathbb{K}}+1.
$$
The main tool in the proof is a recursive version of the weighted
Sidelnikov-Welch inequality \cite{We,Sid,SW}. To the best of our knowledge, this recursive
form has not appeared previously in the literature. We were led to it by
the projection-constant problem considered here. The usual
Sidelnikov-Welch inequality gives a direct lower bound for a sum involving
a fixed even power of the inner products, whereas our argument requires
more precise information relating two consecutive even powers.

More precisely, for unit vectors
$u_1,\ldots,u_N\in\mathbb K^m$ and real coefficients
$w_1,\ldots,w_N$, we prove that
\begin{equation}
\label{recursiveSWintro}
\sum_{i,j=1}^N
w_iw_j|\langle u_i,u_j\rangle|^{2t}
\geq
\frac{2t-1}{m+2t-2}
\sum_{i,j=1}^N
w_iw_j|\langle u_i,u_j\rangle|^{2t-2}
\end{equation}
in the real case, and
\begin{equation}
\label{recursiveSWintroC}
\sum_{i,j=1}^N
w_iw_j|\langle u_i,u_j\rangle|^{2t}
\geq
\frac{t}{m+t-1}
\sum_{i,j=1}^N
w_iw_j|\langle u_i,u_j\rangle|^{2t-2}
\end{equation}
in the complex case.

Iterating these inequalities recovers the usual weighted
Sidelnikov-Welch bound. The advantage of the recursive formulation is
therefore not an improvement of the final Welch constant itself, but the
fact that it retains the lower-order power sum instead of replacing it
immediately by a universal estimate. This distinction is essential for
our application. For $t=2$, the recursive inequality relates the sum of
fourth powers directly to the corresponding sum of squares, and the latter
can be combined with the Parseval-frame identities. The resulting estimate
is strong enough to cover a substantially larger range of parameters in
the complementary-dimension problem, leaving only a few low-dimensional
cases, all of which are already known.

\section{ Recursive version of the weighted Sidelnikov-Welch
inequality}
We begin by recalling the Fischer inner product and the notation needed below. Since our argument involves only real polynomials, we restrict throughout to the real setting and formulate the relevant properties of the Fischer inner product accordingly.
For a multi-index $\alpha\in\mathbb{N}_0^m$, we write
$
|\alpha| := \alpha_1 + \cdots + \alpha_m,
$
$
\alpha! := \alpha_1! \cdots \alpha_m!,
$
$
x^\alpha := x_1^{\alpha_1} \cdots x_m^{\alpha_m},
$
$
\partial^\alpha := \partial_1^{\alpha_1} \cdots \partial_m^{\alpha_m},
$
where $\partial_j := \partial/\partial x_j$.
To any polynomial $h = \sum_\alpha h_\alpha x^\alpha$, we associate the  differential operator $h(\partial) := \sum_\alpha h_\alpha \partial^\alpha$.
The \textit{Fischer inner product} of two polynomials $f = \sum_\alpha f_\alpha x^\alpha$ and $g = \sum_\alpha g_\alpha x^\alpha$ is defined by
\begin{equation}
\langle f, g \rangle_{\rm F} := \big( f(\partial) g \big)(0) = \sum_{\alpha \in \mathbb{N}_0^m} \alpha! \, f_\alpha g_\alpha.
\end{equation}
For $n \in \mathbb{N}_0$, we denote by $\mathcal{H}_n$ the space of homogeneous polynomials of degree $n$,
i.e., the space of polynomials of the form $f = \sum_{|\alpha| = n} f_\alpha x^\alpha.$  One basic property of the Fischer inner product is the adjointness between multiplication and differentiation.
Precisely, for all polynomials $f$, $g$, and $h$,
\begin{equation}
\label{muldif}
\langle h(\partial)f,  g \rangle_{\rm F}=\langle f, hg \rangle_{\rm F},
\end{equation}\label{FischerInner}
see e.g. \cite[Section~2]{AR}.
Applying the above to $h(x)=\|x\|^2$ and observing that
$h(\partial)=\Delta:=\sum_{j=1}^m\partial_j^2,$
we obtain
\begin{equation}
\label{lapmul}
\langle f,\|x\|^2g\rangle_{\rm F}
=
\langle\Delta f,g\rangle_{\rm F}.
\end{equation}
This relation will be used in the proof of the second identity in the
following lemma.

\begin{lemma}
\label{lemlaplacidentity}
For $n \ge 0$ and $g \in \mathcal{H}_n$,
\begin{align}
\label{eqlaplacian}
\Delta \big( \|x\|^2 g \big) & = \|x\|^2 \Delta g + (4n+2m) \, g,\\
\label{eqnlaplacian}
\big\| \|x\|^2 g \big\|_{\rm F}^2 & = \|\Delta g\|_{\rm F}^2 + (4n+2m) \, \|g\|_{\rm F}^2.
\end{align}
\end{lemma}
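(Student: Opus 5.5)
The plan is to obtain \eqref{eqlaplacian} by a direct Leibniz-rule computation and then deduce \eqref{eqnlaplacian} from it using the adjointness relation \eqref{lapmul}.

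For the first identity, write $r(x)=\|x\|^2$ and apply the product rule for the Laplacian:
$$\Delta(rg)=r\,\Delta g+2\langle\nabla r,\nabla g\rangle+g\,\Delta r .$$
Since $\nabla r=2x$ and $\Delta r=2m$, this becomes
$$\Delta(rg)=r\,\Delta g+4\sum_{j=1}^m x_j\partial_j g+2m\,g .$$
Because $g\in\mathcal H_n$, Euler's identity gives $\sum_j x_j\partial_j g=n g$, and \eqref{eqlaplacian} follows. No difficulty is expected here.

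For the second identity, compute $\|rg\|_{\rm F}^2=\langle rg,rg\rangle_{\rm F}$. First use \eqref{lapmul}, with $f=rg$, to move one factor $r$ across the inner product:
$$\langle rg,rg\rangle_{\rm F}=\langle \Delta(rg),g\rangle_{\rm F}.$$
Substituting \eqref{eqlaplacian} gives
$$\langle r\Delta g,g\rangle_{\rm F}+(4n+2m)\|g\|_{\rm F}^2 .$$
For the remaining term, use the symmetry of the real Fischer inner product together with \eqref{lapmul} once more:
$$\langle r\Delta g,g\rangle_{\rm F}=\langle g,r\Delta g\rangle_{\rm F}=\langle\Delta g,\Delta g\rangle_{\rm F}=\|\Delta g\|_{\rm F}^2 .$$
This yields \eqref{eqnlaplacian}.

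The only point needing care is justifying that \eqref{lapmul} may be applied on either side. This rests on the symmetry of $\langle\cdot,\cdot\rangle_{\rm F}$ on real polynomials, which is immediate from the coefficient formula $\sum_\alpha\alpha!\,f_\alpha g_\alpha$. The case $n=0$, where $\Delta g=0$ and $g$ is constant, is covered by the same computation.
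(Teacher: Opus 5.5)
Your proposal is correct and follows essentially the paper's proof. The paper obtains \eqref{eqlaplacian} by differentiating $\|x\|^2 g$ twice coordinatewise, which amounts to your product rule for the Laplacian, combined with Euler's identity, and it gets \eqref{eqnlaplacian} by applying \eqref{lapmul} twice exactly as you do; your explicit appeal to the symmetry of the real Fischer inner product spells out the step the paper compresses into ``using \eqref{lapmul} once more''.
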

\noindent
{\sc Proof.}
Since $\|x\|^2=\sum_{i=1}^m x_i^2,$ for each $i\in[1:m]$ we have
$$
\partial_i\bigl(\|x\|^2g\bigr)
=
2x_i g+\|x\|^2\partial_i g.$$
Differentiating once more gives
$$
\partial_i^2\bigl(\|x\|^2g\bigr)
=
2g+4x_i\partial_i g+\|x\|^2\partial_i^2 g.$$
Summing over $i$ yields
\begin{equation}
\label{someequality}
\Delta\bigl(\|x\|^2g\bigr)
=
2mg
+
4\sum_{i=1}^m x_i\partial_i g
+
\|x\|^2\Delta g.
\end{equation}
Since $g$ is homogeneous of degree $n$, we may write
$g(x)=\sum_{|\alpha|=n} g_\alpha x^\alpha.$
Hence \begin{align*}
\sum_{i=1}^m x_i\partial_i g
&=
\sum_{i=1}^m
x_i\partial_i
\Big(
\sum_{|\alpha|=n}g_\alpha x^\alpha
\Big)=
\sum_{|\alpha|=n}
g_\alpha
\sum_{i=1}^m
\alpha_i x^\alpha\\
&=
\sum_{|\alpha|=n}
|\alpha|\,g_\alpha x^\alpha
=
n\sum_{|\alpha|=n}g_\alpha x^\alpha
=
ng.
\end{align*}
Substituting this into \eqref{someequality} proves \eqref{eqlaplacian}. For the second identity, \eqref{lapmul} and
\eqref{eqlaplacian} give
\begin{align*}
\bigl\|\|x\|^2g\bigr\|_{\rm F}^2
&=
\bigl\langle\|x\|^2g,\|x\|^2g,\rangle_{\rm F}=
\bigl\langle\Delta(\|x\|^2g),g\bigr\rangle_{\rm F}\\
&=
\bigl\langle\|x\|^2\Delta g,g\bigr\rangle_{\rm F}
+
(4n+2m)\|g\|_{\rm F}^2\\
&=
\|\Delta g\|_{\rm F}^2
+
(4n+2m)\|g\|_{\rm F}^2,
\end{align*}
where in the last line we used \eqref{lapmul} once more.
This proves \eqref{eqnlaplacian}.\\
~\\
We next use a simple consequence of the definition of the Fischer inner
product. For $u\in\mathbb R^m$ and $n\geq 0$, the multinomial theorem gives
\begin{equation}
\label{multi}
\langle u,x\rangle^n
=
\sum_{|\alpha|=n}
\frac{n!}{\alpha!}\,u^\alpha x^\alpha.
\end{equation}
Hence, if
$$
f(x)=\sum_{|\alpha|=n}f_\alpha x^\alpha\in\mathcal H_n,
$$
then
\begin{align}
\label{scalerepr}
\big\langle f,\langle u,x\rangle^n\big\rangle_{\rm F}
&=
\sum_{|\alpha|=n}
\alpha!\,f_\alpha
\frac{n!}{\alpha!}\,u^\alpha=
n!\sum_{|\alpha|=n}f_\alpha u^\alpha
=
n!\,f(u).
\end{align}
In particular, taking $f(x)=\langle v,x\rangle^n$, we obtain
\begin{equation}
\label{eqpowers}
\big\langle
\langle u,x\rangle^n,
\langle v,x\rangle^n
\big\rangle_{\rm F}
=
n!\,\langle u,v\rangle^n,
\qquad
u,v\in\mathbb R^m.
\end{equation}
Identity \eqref{eqpowers} connects the Fischer inner product with the
weighted sums of powers of inner products occurring in the Sidelnikov-Welch-type
bound. In view of \eqref{eqpowers}, let
$u_1,\ldots,u_N\in\mathbb R^m$ be unit vectors and
$w_1,\ldots,w_N\in\mathbb R$. For $t\in\mathbb{N}_0$, define
\begin{equation}
\label{Pt}
P_t(x)
:=
\sum_{i=1}^N w_i\langle u_i,x\rangle^{2t}
\in\mathcal H_{2t}.
\end{equation}
The polynomials $P_t$ satisfy several simple identities involving the
Fischer inner product and the Laplacian. We collect them in the following
lemma.
\begin{lemma}
\label{lempolynomials}
Let $P_t$ be defined by \eqref{Pt}. For $t\in \mathbb{N}_0$,
\begin{align}
\|P_t\|_{\rm F}^2
&=(2t)!\sum_{i,j=1}^N w_iw_j|\langle u_i,u_j\rangle|^{2t},
\label{Ptnorm}
\end{align}
and
\begin{align}
\big\langle P_{t+1},\|x\|^2P_t\big\rangle_{\rm F}
&=(2t+2)(2t+1)\|P_t\|_{\rm F}^2
=(2t+2)!\sum_{i,j=1}^N
w_iw_j|\langle u_i,u_j\rangle|^{2t}.
\label{Ptinner}
\end{align}
For every $t\geq1$, one also has
\begin{align}
\Delta P_t
&=2t(2t-1)P_{t-1},
\label{Ptlap}\\
\big\|\|x\|^2P_t\big\|_{\rm F}^2
&=\big(2t(2t-1)\big)^2\|P_{t-1}\|_{\rm F}^2
+2(4t+m)\|P_t\|_{\rm F}^2.
\label{Pttsq}
\end{align}
\end{lemma}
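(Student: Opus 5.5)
The plan is to prove the four identities in the order \eqref{Ptnorm}, \eqref{Ptlap}, \eqref{Ptinner}, \eqref{Pttsq}. Each later identity will rest on the earlier ones, together with \eqref{eqpowers}, \eqref{lapmul} and Lemma~\ref{lemlaplacidentity}.

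\textbf{The norm identity \eqref{Ptnorm}.} Expand $\|P_t\|_{\rm F}^2=\langle P_t,P_t\rangle_{\rm F}$ bilinearly as a double sum over $i,j$ of $w_iw_j\langle\langle u_i,x\rangle^{2t},\langle u_j,x\rangle^{2t}\rangle_{\rm F}$. By \eqref{eqpowers} with $n=2t$, each term equals $(2t)!\,w_iw_j\langle u_i,u_j\rangle^{2t}$. Since everything is real and the exponent is even, $\langle u_i,u_j\rangle^{2t}=|\langle u_i,u_j\rangle|^{2t}$.

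\textbf{The Laplacian identity \eqref{Ptlap}.} For a single term and $n\ge 2$, two applications of the chain rule give $\partial_k^2\langle u,x\rangle^n=n(n-1)u_k^2\langle u,x\rangle^{n-2}$. Summing over $k$ yields $\Delta\langle u,x\rangle^{n}=n(n-1)\|u\|^2\langle u,x\rangle^{n-2}$. With $n=2t$, $t\ge1$, and $\|u_i\|=1$, linearity gives $\Delta P_t=2t(2t-1)P_{t-1}$. This is the only place where the unit-length hypothesis enters.

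\textbf{The mixed identity \eqref{Ptinner}.} By \eqref{lapmul} and the symmetry of the Fischer inner product,
\begin{equation*}
\langle P_{t+1},\|x\|^2P_t\rangle_{\rm F}=\langle \Delta P_{t+1},P_t\rangle_{\rm F}.
\end{equation*}
Applying \eqref{Ptlap} with $t+1\ge1$ replaces $\Delta P_{t+1}$ by $(2t+2)(2t+1)P_t$; this step is valid for every $t\in\mathbb N_0$. We then insert \eqref{Ptnorm} and use $(2t+2)(2t+1)(2t)!=(2t+2)!$.

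\textbf{The norm of $\|x\|^2P_t$, \eqref{Pttsq}.} Apply \eqref{eqnlaplacian} with $g=P_t\in\mathcal H_{2t}$, so $n=2t$ and $4n+2m=2(4t+m)$. Then substitute $\|\Delta P_t\|_{\rm F}^2=(2t(2t-1))^2\|P_{t-1}\|_{\rm F}^2$, which follows from \eqref{Ptlap}.

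No step is a genuine obstacle. The only point needing care is the Laplacian computation for $\langle u,x\rangle^{2t}$, where one must correctly pick up the factor $\|u\|^2=1$, and must check that the case $t=0$ in \eqref{Ptinner} is still covered because only $\Delta P_{t+1}$ is used there.
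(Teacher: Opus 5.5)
Your proposal is correct and follows the paper's proof essentially step for step: bilinearity plus \eqref{eqpowers} gives the norm identity, a direct computation gives $\Delta P_t$, then \eqref{lapmul} handles the mixed term and \eqref{eqnlaplacian} with $g=P_t$ gives the last identity. One small point: \eqref{lapmul} already gives $\langle P_{t+1},\|x\|^2P_t\rangle_{\rm F}=\langle\Delta P_{t+1},P_t\rangle_{\rm F}$ directly, so your appeal to symmetry there is unnecessary, though harmless.
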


\noindent
\noindent
{\sc Proof.}
By bilinearity and \eqref{eqpowers},
\begin{align*}
\|P_t\|_{\rm F}^2
&=\sum_{i,j=1}^N w_iw_j
\big\langle \langle u_i,x\rangle^{2t},
\langle u_j,x\rangle^{2t}\big\rangle_{\rm F}
=(2t)!\sum_{i,j=1}^N
w_iw_j|\langle u_i,u_j\rangle|^{2t},
\end{align*}
which proves \eqref{Ptnorm}.
For $u\in\mathbb R^m$,
$$
\Delta\langle u,x\rangle^{2t}
=2t(2t-1)\|u\|^2\langle u,x\rangle^{2t-2}.
$$
Since $\|u_i\|=1$, it follows immediately that
$$
\Delta P_t=2t(2t-1)P_{t-1},
$$
which is \eqref{Ptlap}. Consequently, by \eqref{lapmul},
\begin{align*}
\big\langle P_{t+1},\|x\|^2P_t\big\rangle_{\rm F}
&=\big\langle\Delta P_{t+1},P_t\big\rangle_{\rm F}=(2t+2)(2t+1)\|P_t\|_{\rm F}^2.
\end{align*}
Together with \eqref{Ptnorm}, this gives \eqref{Ptinner}. Finally, applying \eqref{eqnlaplacian} to
$P_t\in\mathcal H_{2t}$ and using \eqref{Ptlap}, we obtain
\begin{align*}
\big\|\|x\|^2P_t\big\|_{\rm F}^2
&=\|\Delta P_t\|_{\rm F}^2
  +2(4t+m)\|P_t\|_{\rm F}^2
=\big(2t(2t-1)\big)^2\|P_{t-1}\|_{\rm F}^2
  +2(4t+m)\|P_t\|_{\rm F}^2,
\end{align*}
which proves \eqref{Pttsq}.

\begin{thm}
\label{ddineq}
Let $m,N\geq1$, let $u_1,\ldots,u_N\in\mathbb K^m$ be unit vectors,
and let $w_1,\ldots,w_N\in\mathbb R$. Then, for every integer
$t\geq1$,
\begin{equation}
\label{inqR}
\sum_{i,j=1}^N w_iw_j|\langle u_i,u_j\rangle|^{2t}
\geq
\frac{2t-1}{m+2t-2}
\sum_{i,j=1}^N w_iw_j|\langle u_i,u_j\rangle|^{2t-2},
\qquad \mathbb K=\mathbb R,
\end{equation}
and
\begin{equation}
\label{inqC}
\sum_{i,j=1}^N w_iw_j|\langle u_i,u_j\rangle|^{2t}
\geq
\frac{t}{m+t-1}
\sum_{i,j=1}^N w_iw_j|\langle u_i,u_j\rangle|^{2t-2},
\qquad \mathbb K=\mathbb C.
\end{equation}
Moreover, for a fixed $t\geq1$, equality in the corresponding
inequality holds if and only if
\begin{equation}
\label{equalitycondition}
\sum_{i=1}^N w_i|\langle u_i,x\rangle|^{2t}
=
c_t(m,\mathbb K)
\Big(\sum_{i=1}^N w_i\Big)\|x\|^{2t},
\qquad x\in\mathbb K^m,
\end{equation}
where
\begin{equation}
c_t(m,\mathbb R)
=
\frac{1\cdot3\cdots(2t-1)}
     {m(m+2)\cdots(m+2t-2)},
\qquad
c_t(m,\mathbb C)
=
\frac{1}{\binom{m+t-1}{t}}.
\end{equation}
In this case, for every integer $1\leq s\leq t$,
\begin{equation}
\label{lowerconditions}
\sum_{i=1}^N w_i|\langle u_i,x\rangle|^{2s}
=
c_s(m,\mathbb K)
\Big(\sum_{i=1}^N w_i\Big)\|x\|^{2s},
\qquad x\in\mathbb K^m.
\end{equation}
\end{thm}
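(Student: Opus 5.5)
The plan is to prove \eqref{inqR} by induction on $t$, using Cauchy--Schwarz for the Fischer inner product together with Lemma~\ref{lempolynomials}. Write $A_t:=\sum_{i,j}w_iw_j|\langle u_i,u_j\rangle|^{2t}$, so that $\|P_t\|_{\rm F}^2=(2t)!A_t\ge 0$ by \eqref{Ptnorm}.

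\emph{Cauchy--Schwarz step.} By \eqref{Ptinner} with $t-1$ in place of $t$, we have $\langle P_t,\|x\|^2P_{t-1}\rangle_{\rm F}=(2t)!A_{t-1}$. Hence
\begin{equation*}
\big((2t)!A_{t-1}\big)^2\le \|P_t\|_{\rm F}^2\,\big\|\|x\|^2P_{t-1}\big\|_{\rm F}^2 .
\end{equation*}

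\emph{Base case $t=1$.} Here $P_0=\sum_i w_i$ is a constant, and \eqref{eqnlaplacian} with $n=0$ gives $\|\|x\|^2P_0\|_{\rm F}^2=2m(\sum_i w_i)^2=2mA_0$. Cauchy--Schwarz then yields $A_1\ge A_0/m$, which is \eqref{inqR} for $t=1$.

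\emph{Inductive step, $t\ge2$.} Formula \eqref{Pttsq} applied to $P_{t-1}$ gives
\begin{equation*}
\big\|\|x\|^2P_{t-1}\big\|_{\rm F}^2=(2t-2)!\,\big[(2t-2)(2t-3)A_{t-2}+2(m+4t-4)A_{t-1}\big].
\end{equation*}
The induction hypothesis in the form $(2t-3)A_{t-2}\le(m+2t-4)A_{t-1}$ bounds the bracket by
\begin{equation*}
\big[(2t-2)(m+2t-4)+2(m+4t-4)\big]A_{t-1}=2t(m+2t-2)A_{t-1}.
\end{equation*}
Inserting this into the Cauchy--Schwarz inequality and dividing by $(2t)!A_{t-1}$ gives $A_t\ge\frac{2t-1}{m+2t-2}A_{t-1}$. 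If $A_{t-1}=0$ the claim is trivial, since $A_t\ge0$.

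\emph{Equality case.} If equality holds and $A_{t-1}>0$, then both estimates are tight. So $A_{t-2}$ satisfies equality at level $t-1$, and by induction $P_{t-1}=c\,\|x\|^{2t-2}$. Equality in Cauchy--Schwarz then forces $P_t$ to be a multiple of $\|x\|^2P_{t-1}$, hence of $\|x\|^{2t}$. The constant is fixed by applying \eqref{Ptlap} repeatedly together with $\Delta\|x\|^{2s}=2s(m+2s-2)\|x\|^{2s-2}$. Iterating down to $P_0=\sum_i w_i$ produces exactly $c_t(m,\mathbb R)$.

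Conversely, if \eqref{equalitycondition} holds, applying $\Delta$ and \eqref{Ptlap} $t-s$ times gives \eqref{lowerconditions} for all $s\le t$. With these, $A_t$ and $A_{t-1}$ can be computed directly from \eqref{scalerepr}: $\langle P_t,P_t\rangle_{\rm F}=(2t)!\sum_i w_iP_t(u_i)$. This gives $A_s=c_s(\sum_i w_i)^2$, and equality follows because $c_t/c_{t-1}=\frac{2t-1}{m+2t-2}$. The degenerate case $A_{t-1}=0$ needs separate treatment. There $P_{t-1}=0$, hence $\sum_i w_i=0$ by repeated Laplacians, so equality means $A_t=0$, i.e. $P_t=0$, which agrees with \eqref{equalitycondition}.

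\emph{Complex case.} I would repeat the whole argument on the space of polynomials in $z,\bar z$ that are bihomogeneous of degree $(n,n)$. The Fischer product is taken with respect to $\partial_{z_j},\partial_{\bar z_j}$, the Laplacian is replaced by $\Delta=\sum_j\partial_{z_j}\partial_{\bar z_j}$, and one uses $P_t(z)=\sum_i w_i|\langle u_i,z\rangle|^{2t}$. The analogues of \eqref{eqpowers}, \eqref{eqlaplacian} and \eqref{Ptlap} hold with the constants $(t!)^2$, $(n+m)$-type factors, and $t^2$ respectively. The same telescoping then produces the bracket $t(m+t-1)$ and the ratio $\frac{t}{m+t-1}$.

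\emph{Main obstacle.} I expect the main difficulty to be re-deriving these complex Fischer identities with the correct constants, and checking that the bracket collapses to $t(m+t-1)$ in the same way it collapses to $2t(m+2t-2)$ in the real case.
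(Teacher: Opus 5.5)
Your real-case argument is the paper's: Cauchy--Schwarz between $P_t$ and $\|x\|^2P_{t-1}$, the bound on $\|\|x\|^2P_{t-1}\|_{\rm F}^2$ from \eqref{Pttsq} combined with the inductive hypothesis, and equality forcing tightness at every lower level. Your converse via \eqref{scalerepr}, which computes $A_s=c_s(m,\mathbb R)(\sum_i w_i)^2$, is only a cosmetic variant of the paper's computation $\|\Delta P_t\|_{\rm F}^2=\langle P_t,\|x\|^2\Delta P_t\rangle_{\rm F}$, and it also gives the extremal values.

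The complex case is where you genuinely depart from the paper. The paper builds no complex Fischer calculus. It realifies, replacing each $u_i$ by the $2t+1$ rotated vectors $\rho(e^{2\pi\mathrm{i}s/(2t+1)}u_i)\in\mathbb R^{2m}$ with weights $w_i/(2t+1)$. It then uses the averaging identity \eqref{rotationidentity} to turn real power sums of order $2k$ into $4^{-k}\binom{2k}{k}$ times the complex ones, for $k=t-1,t$. So \eqref{inqC} and its equality case are read off from \eqref{inqR} in dimension $2m$, at the cost of the bookkeeping $4^{k}c_k(2m,\mathbb R)/\binom{2k}{k}=c_k(m,\mathbb C)$.

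Your intrinsic route also works, and the step you flagged does close. Take $\langle f,g\rangle=\sum_{\alpha,\beta}\alpha!\,\beta!\,\overline{f_{\alpha\beta}}\,g_{\alpha\beta}$ for $f=\sum f_{\alpha\beta}z^\alpha\bar z^\beta$, and $\Delta_{\mathbb C}=\sum_j\partial_{z_j}\partial_{\bar z_j}$, which is the adjoint of multiplication by $\|z\|^2$. Then:
$\langle|\langle u,z\rangle|^{2t},|\langle v,z\rangle|^{2t}\rangle=(t!)^2|\langle u,v\rangle|^{2t}$;
$\Delta_{\mathbb C}(\|z\|^2g)=\|z\|^2\Delta_{\mathbb C}g+(m+2n)g$ for $g$ of bidegree $(n,n)$;
$\Delta_{\mathbb C}P_t=t^2P_{t-1}$.
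The inductive bracket becomes $(t-1)(m+t-2)+(m+2t-2)=t(m+t-1)$, which gives the ratio $t/(m+t-1)$. The identity $\Delta_{\mathbb C}\|z\|^{2s}=s(m+s-1)\|z\|^{2s-2}$ then yields $\prod_{s=1}^{t}\frac{s}{m+s-1}=1/\binom{m+t-1}{t}$ directly.

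The paper's trick is more economical, since it reuses the real theorem verbatim. Yours is more self-contained and explains the complex constants structurally. However, you must actually verify the Hermitian versions of the Fischer identities rather than quote the real ones: positivity, the adjointness, and well-definedness via linear independence of the monomials $z^\alpha\bar z^\beta$ as functions on $\mathbb C^m$.
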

{\sc Proof.}
We first consider the real case. For the given unit vectors $u_1,\ldots,u_N$ and real coefficients
$w_1,\ldots,w_N$, let $P_t$ be defined as in \eqref{Pt}. By \eqref{Ptnorm}, inequality
\eqref{inqR} is equivalent to
\begin{equation}
\label{Ptineq}
\|P_t\|_{\rm F}^2
\geq
\frac{2t(2t-1)^2}{m+2t-2}\,
\|P_{t-1}\|_{\rm F}^2.
\end{equation}
If $P_{t-1}=0$, the claim is immediate. We may therefore assume that
$P_{t-1}\neq 0$.
Now we proceed by induction on $t.$
For the initial step, \eqref{Ptinner} and \eqref{eqnlaplacian} give
$$
\big\langle P_1,\|x\|^2P_0\big\rangle_{\rm F}
=2\|P_0\|_{\rm F}^2,
\qquad
\big\|\|x\|^2P_0\big\|_{\rm F}^2
=2m\|P_0\|_{\rm F}^2.
$$
Hence, by the Cauchy-Schwarz inequality,
$$
4\|P_0\|_{\rm F}^4
\leq
2m\|P_1\|_{\rm F}^2\|P_0\|_{\rm F}^2,
$$
and therefore
$$
\|P_1\|_{\rm F}^2
\geq
\frac{2}{m}\|P_0\|_{\rm F}^2.
$$
Let us now assume that the induction hypothesis holds up to $t,$ and let us show that it holds for $t+1,$ too. Then \eqref{Pttsq} and the assumption give
\begin{align*}
\big\|\|x\|^2P_{t}\big\|_{\rm F}^2 
&\leq4t^2(2t-1)^2\|P_{t-1}\|_{\rm F}^2+2(4t+m)\|P_{t}\|_{\rm F}^2\\
&\leq
\big(2t(m+2t-2)+2(4t+m)\big)\|P_t\|_{\rm F}^2
=
2(t+1)(m+2t)\|P_t\|_{\rm F}^2.
\end{align*}
Combining \eqref{Ptinner} with the Cauchy-Schwarz inequality and the
estimate above, we get
\begin{align*}
\big((2t+2)(2t+1)\big)^2\|P_t\|_{\rm F}^4
&\leq
\|P_{t+1}\|_{\rm F}^2
\big\|\|x\|^2P_t\big\|_{\rm F}^2
\leq
(2t+2)(m+2t)
\|P_{t+1}\|_{\rm F}^2\|P_t\|_{\rm F}^2.
\end{align*}
Hence
$$
\|P_{t+1}\|_{\rm F}^2
\geq
\frac{2(t+1)(2t+1)^2}{m+2t}\,
\|P_t\|_{\rm F}^2.
$$
This shows that the induction hypothesis holds for $t+1$ and concludes the inductive proof.

We now characterize the case of equality. Suppose first that equality
holds in \eqref{Ptineq}. If $P_t=0$, then necessarily $P_{t-1}=0$,
and successive applications of \eqref{Ptlap} give
$P_{t-2}=\cdots=P_0=0$. Thus the required identities hold trivially. Assume therefore that $P_t\neq0$. It follows from the proof of \eqref{Ptineq} that equality for $t$
forces equality for every $s\in[1:t].$
In particular, equality must hold in each application of the
Cauchy-Schwarz inequality. Thus $P_s$ and
$\|\cdot\|^2P_{s-1}$ are linearly dependent for every $s\in[1:t]$.
Repeatedly applying this observation shows that there exist constants $C_{s}\in \mathbb{R}$ such that
\begin{equation*}
    P_s(x)=C_{s}\|x\|^{2s}P_0(x)=C_{s}\|x\|^{2s}\sum_{i=1}^{N}w_i.
\end{equation*}
It remains to determine the coefficients $C_t$. Substituting the above
form of $P_s$ into \eqref{Ptlap}, we obtain
$$
2s(2s-1)C_{s-1}\|x\|^{2s-2}\sum_{i=1}^{N}w_i
=
\Delta P_s(x)
=
2s(m+2s-2)C_s\|x\|^{2s-2}\sum_{i=1}^{N}w_i,
$$
and hence
$$
C_s=
\frac{2s-1}{m+2s-2}\,C_{s-1}.
$$
Since $C_0=1$, iteration gives
$
C_t
=
c_t(m,\mathbb R),
$
as required.\\
Conversely, suppose that
\begin{equation}
\label{radialPt}
P_t(x)
=
c_t(m,\mathbb R)
\Big(\sum_{i=1}^N w_i\Big)\|x\|^{2t}.
\end{equation}
Then
$$
\|x\|^2\Delta P_t
=
2t(m+2t-2)P_t.
$$
Using \eqref{lapmul} and \eqref{Ptlap}, we obtain
\begin{align*}
\big(2t(2t-1)\big)^2\|P_{t-1}\|_{\rm F}^2
&=
\|\Delta P_t\|_{\rm F}^2
=
\big\langle P_t,\|x\|^2\Delta P_t\big\rangle_{\rm F}
=
2t(m+2t-2)\|P_t\|_{\rm F}^2.
\end{align*}
After rearranging,
$$
\|P_t\|_{\rm F}^2
=
\frac{2t(2t-1)^2}{m+2t-2}\,
\|P_{t-1}\|_{\rm F}^2,
$$
which is equality in \eqref{Ptineq}, and hence in \eqref{inqR}.

We now turn to the complex case. Throughout this part,
$\langle\cdot,\cdot\rangle_{\mathbb C}$ and
$\langle\cdot,\cdot\rangle_{\mathbb R}$ denote the standard inner
products on $\mathbb C^m$ and $\mathbb R^{2m}$, respectively.  Let
$\rho:\mathbb C^m\to\mathbb R^{2m}$ be the realification map
$
\rho(z):=({\rm Re}z,\operatorname{Im}z).
$
Then $\rho$ is an isometry and
\begin{equation}
\label{realification}
\big\langle\rho(z),\rho(y)\big\rangle_{\mathbb R}
=
{\rm Re}\langle z,y\rangle_{\mathbb C},
\qquad z,y\in\mathbb C^m.
\end{equation}
Fix $t\geq1$. In the sequel, we will need the following elementary identity. For every
$a\in\mathbb C$ and every  $k\in[0:t]$,
\begin{equation}
\label{rotationidentity}
\frac{1}{2t+1}\sum_{s=0}^{2t}
\Bigl(
{\rm Re}\bigl(e^{2\pi\mathrm{i}s/(2t+1)}a\bigr)
\Bigr)^{2k}
=
4^{-k}\binom{2k}{k}|a|^{2k}.
\end{equation}
Indeed, using
$$
{\rm Re}\bigl(e^{2\pi\mathrm{i}s/(2t+1)}a\bigr)
=
\frac{1}{2}
\left(
e^{2\pi\mathrm{i}s/(2t+1)}a
+
e^{-2\pi\mathrm{i}s/(2t+1)}\bar a
\right),
$$
the binomial theorem gives
\begin{align*}
\Bigl(
{\rm Re}\bigl(e^{2\pi\mathrm{i}s/(2t+1)}a\bigr)
\Bigr)^{2k}
&=
4^{-k}\sum_{\ell=0}^{2k}
\binom{2k}{\ell}
a^\ell\bar a^{\,2k-\ell}
e^{2\pi\mathrm{i}s(2\ell-2k)/(2t+1)}.
\end{align*}
Therefore,
\begin{align*}
\frac{1}{2t+1}\sum_{s=0}^{2t}
\Bigl(
{\rm Re}\bigl(e^{2\pi\mathrm{i}s/(2t+1)}a\bigr)
\Bigr)^{2k}\!\!=
4^{-k}\sum_{\ell=0}^{2k}
\binom{2k}{\ell}a^\ell\bar a^{\,2k-\ell}
\left(
\frac{1}{2t+1}\sum_{s=0}^{2t}
e^{2\pi\mathrm{i}s(2\ell-2k)/(2t+1)}
\right).
\end{align*}
The sum in parentheses equals $1$ when $\ell=k$ and vanishes otherwise.
Therefore,
\begin{align*}
\frac{1}{2t+1}\sum_{s=0}^{2t}
\Bigl(
{\rm Re}\bigl(e^{2\pi\mathrm{i}s/(2t+1)}a\bigr)
\Bigr)^{2k}
&=
4^{-k}\binom{2k}{k}a^k\bar a^k
=
4^{-k}\binom{2k}{k}|a|^{2k},
\end{align*}
which proves \eqref{rotationidentity}.
For $i\in[1:N]$ and $s\in[0:2t]$, define
$$
\widetilde{u}_{is}
:=
\rho\bigl(e^{2\pi\mathrm{i}s/(2t+1)}u_i\bigr)
\in\mathbb R^{2m},
\qquad
\widetilde{w}_{is}
:=
\frac{w_i}{2t+1}.
$$
Since $\rho$ is an isometry, each $\widetilde{u}_{is}$ is a unit vector
in $\mathbb R^{2m}$. Moreover, by \eqref{realification},
\begin{align*}
\big\langle\widetilde{u}_{ir},\widetilde{u}_{js}\big\rangle_{\mathbb R}
&=
{\rm Re}
\big\langle
e^{2\pi\mathrm{i}r/(2t+1)}u_i,
e^{2\pi\mathrm{i}s/(2t+1)}u_j
\big\rangle_{\mathbb C}=
{\rm Re}\Bigl(
e^{2\pi\mathrm{i}(r-s)/(2t+1)}
\langle u_i,u_j\rangle_{\mathbb C}
\Bigr).
\end{align*}
We now relate the sums for the real configuration to those for the
original complex vectors. Fix $0\leq k\leq t$. Since the inner product
$\langle\widetilde{u}_{ir},\widetilde{u}_{js}\rangle_{\mathbb R}$
is determined by the value of $r-s$ modulo $2t+1$, we group the
terms according to this difference. For each
$h\in[0:2t]$ there are exactly $2t+1$ pairs $(r,s)$ satisfying
$r-s\equiv h\pmod{2t+1}$, one for each choice of $s$. Hence,
using \eqref{rotationidentity}, we have
\begin{align}\label{sumidentity}
\sum_{i,j=1}^N\sum_{r,s=0}^{2t}
\widetilde{w}_{ir}\widetilde{w}_{js}
\big|\langle\widetilde{u}_{ir},\widetilde{u}_{js}\rangle_{\mathbb R}\big|^{2k}&=
\sum_{i,j=1}^N w_iw_j\frac{1}{2t+1}
\sum_{h=0}^{2t}
\Bigl(
{\rm Re}\bigl(
e^{2\pi\mathrm{i}h/(2t+1)}
\langle u_i,u_j\rangle_{\mathbb C}
\bigr)
\Bigr)^{2k}\notag \\
&=
4^{-k}\binom{2k}{k}
\sum_{i,j=1}^N
w_iw_j
\big|\langle u_i,u_j\rangle_{\mathbb C}\big|^{2k}.
\end{align}
Applying the real inequality \eqref{inqR} in $\mathbb R^{2m}$ to the
unit vectors $\widetilde{u}_{is}$ with coefficients
$\widetilde{w}_{is}$, we obtain
\begin{align*}
\sum_{i,j=1}^N\sum_{r,s=0}^{2t}
\widetilde{w}_{ir}\widetilde{w}_{js}
\big|\langle\widetilde{u}_{ir},\widetilde{u}_{js}\rangle_{\mathbb R}\big|^{2t}
\geq
\frac{2t-1}{2m+2t-2}
\sum_{i,j=1}^N\sum_{r,s=0}^{2t}
\widetilde{w}_{ir}\widetilde{w}_{js}
\big|\langle\widetilde{u}_{ir},\widetilde{u}_{js}\rangle_{\mathbb R}\big|^{2t-2}.
\end{align*}
Using the identity \eqref{sumidentity} with $k=t$ on the left-hand side and
$k=t-1$ on the right-hand side, the above reads
\begin{multline*}
4^{-t}\binom{2t}{t}
\sum_{i,j=1}^N w_iw_j
\big|\langle u_i,u_j\rangle_{\mathbb C}\big|^{2t}
\geq
\frac{(2t-1)4^{-(t-1)}}{2m+2t-2}
\binom{2t-2}{t-1}
\sum_{i,j=1}^N w_iw_j
\big|\langle u_i,u_j\rangle_{\mathbb C}\big|^{2t-2}.
\end{multline*}
After simplifying the constants, we have
$$
\sum_{i,j=1}^N w_iw_j
\big|\langle u_i,u_j\rangle_{\mathbb C}\big|^{2t}
\geq
\frac{t}{m+t-1}
\sum_{i,j=1}^N w_iw_j
\big|\langle u_i,u_j\rangle_{\mathbb C}\big|^{2t-2},
$$
as required. We now characterize the case of equality.  Since \eqref{inqC} was
obtained from \eqref{inqR} for the real configuration
$\{\widetilde{u}_{is},\widetilde{w}_{is}\}$ using only identities,
equality in \eqref{inqC} holds if and only if equality holds in
\eqref{inqR} for this configuration.
By the real case, this is equivalent to
\begin{equation}
\label{complequal}
\sum_{i=1}^N\sum_{r=0}^{2t}
\widetilde{w}_{ir}
\big|\langle\widetilde{u}_{ir},y\rangle_{\mathbb R}\big|^{2k}
=
c_k(2m,\mathbb R)
\left(\sum_{i=1}^N w_i\right)\|y\|^{2k},
\end{equation}
for every $1\leq k\leq t$ and every $y\in\mathbb R^{2m}$.
For $x\in\mathbb C^m$, set $y=\rho(x)$. Then
\eqref{realification}, \eqref{rotationidentity}, and the definitions of
$\widetilde{u}_{ir}$ and $\widetilde{w}_{ir}$ give
\begin{align*}
\sum_{i=1}^N\sum_{r=0}^{2t}
\widetilde{w}_{ir}
\big|\langle\widetilde{u}_{ir},\rho(x)\rangle_{\mathbb R}\big|^{2k}
&=
\sum_{i=1}^N
w_i\frac{1}{2t+1}
\sum_{r=0}^{2t}
\Bigl(
{\rm Re}\bigl(
e^{2\pi{\rm i}r/(2t+1)}
\langle u_i,x\rangle_{\mathbb C}
\bigr)
\Bigr)^{2k}\\
&=
4^{-k}\binom{2k}{k}
\sum_{i=1}^N
w_i|\langle u_i,x\rangle_{\mathbb C}|^{2k}.
\end{align*}
Since $\rho$ is an isometry from $\mathbb C^m$ onto $\mathbb R^{2m}$,
condition \eqref{complequal} is therefore equivalent to
\begin{equation}
\label{complexequalitycondition}
\sum_{i=1}^N
w_i|\langle u_i,x\rangle_{\mathbb C}|^{2k}
=
\frac{c_k(2m,\mathbb R)}
     {4^{-k}\binom{2k}{k}}
\Big(\sum_{i=1}^N w_i\Big)\|x\|^{2k},
\end{equation}
for every $1\leq k\leq t$ and every $x\in\mathbb C^m$.
After simple calculations, we get
\begin{equation}
\label{complexlowerconditions}
\sum_{i=1}^N
w_i|\langle u_i,x\rangle_{\mathbb C}|^{2k}
=
c_k(m,\mathbb C)
\Big(\sum_{i=1}^N w_i\Big)\|x\|^{2k},
\qquad 1\leq k\leq t.
\end{equation}
For $k=t$ we recover the equality condition in \eqref{inqC}, while
$k<t$ gives the corresponding identities for the lower powers.

Iterating Theorem~\ref{ddineq} yields the following corollary. For nonnegative weights summing to one, it reduces to the weighted Sidelnikov-Welch bound.

\begin{cor}
Let $u_1,\ldots,u_N\in\mathbb K^m$ be unit vectors and let
$w_1,\ldots,w_N\in\mathbb R$. Then, for every integer $t\geq1$,
\begin{equation}
\sum_{i,j=1}^N
w_iw_j|\langle u_i,u_j\rangle|^{2t}
\geq
c_t(m,\mathbb K)
\left(\sum_{i=1}^N w_i\right)^2,
\end{equation}
where
\begin{equation}
    c_t(m,\mathbb{R})=\frac{1\cdot 3 \cdot 5 \cdots (2t-1)}{m(m+2)\cdots(m+2(t-1))},\;\;
     c_t(m,\mathbb{C})=\frac{1}{\binom{m+t-1}{t} }
\end{equation}
\end{cor}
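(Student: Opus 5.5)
The plan is to iterate Theorem~\ref{ddineq} downward from $t$ to $0$. Write
$$S_k:=\sum_{i,j=1}^N w_iw_j|\langle u_i,u_j\rangle|^{2k}.$$
For $k=0$ every term equals $1$, because $|\langle u_i,u_j\rangle|^0=1$; the case $i=j$ is consistent with this since the vectors are unit. Hence $S_0=\bigl(\sum_i w_i\bigr)^2$.

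Theorem~\ref{ddineq} gives, for each $1\le k\le t$, the inequality $S_k\ge a_k S_{k-1}$. Here $a_k=\frac{2k-1}{m+2k-2}$ in the real case and $a_k=\frac{k}{m+k-1}$ in the complex case. All the $a_k$ are positive.

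The only point needing care is that the weights $w_i$ are arbitrary reals, so one might worry about signs of the $S_k$. We argue by induction that each $S_k\ge 0$ and in fact $S_k\ge \bigl(\prod_{\ell=1}^k a_\ell\bigr)S_0$.
\begin{itemize}
\item The base case $k=0$ is trivial.
\item For the inductive step, multiply the hypothesis $S_{k-1}\ge \bigl(\prod_{\ell<k}a_\ell\bigr)S_0$ by $a_k>0$. Combined with $S_k\ge a_kS_{k-1}$, this gives the claim for $k$.
\end{itemize}
Only positivity of the constants is used, not any sign of $S_k$. Nonnegativity of $S_k$ also follows independently from \eqref{Ptnorm} in the real case.

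It remains to identify the product. In the real case,
$$\prod_{\ell=1}^t\frac{2\ell-1}{m+2\ell-2}=\frac{1\cdot3\cdots(2t-1)}{m(m+2)\cdots(m+2t-2)}=c_t(m,\mathbb R).$$
In the complex case,
$$\prod_{\ell=1}^t\frac{\ell}{m+\ell-1}=\frac{t!}{m(m+1)\cdots(m+t-1)}=\frac{t!\,(m-1)!}{(m+t-1)!}=\binom{m+t-1}{t}^{-1}=c_t(m,\mathbb C).$$
This yields the corollary.

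There is no real obstacle here. The one thing to check is that the chaining step only multiplies inequalities by the positive constants $a_k$, so the argument is valid for arbitrary real weights.

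If the weights are nonnegative with $\sum w_i=1$, the bound becomes $S_t\ge c_t(m,\mathbb K)$. This is the classical weighted Sidelnikov-Welch inequality, as remarked before the statement.
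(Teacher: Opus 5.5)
Your proposal is correct and is exactly the paper's argument: the paper obtains the corollary by iterating Theorem~\ref{ddineq} from $t$ down to $0$. Like your proof, this uses only the positivity of the ratios and $S_0=\bigl(\sum_i w_i\bigr)^2$, and the products reduce to $c_t(m,\mathbb K)$. One small correction: the convention that matters for $S_0$ is $|\langle u_i,u_j\rangle|^0=1$ for orthogonal pairs $i\neq j$ (this is what $P_0=\sum_i w_i$ encodes in the paper), whereas your remark about the diagonal terms is not the relevant case.
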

\section{Application to maximal relative projection constants}
We now turn to the proof of our main result. We first derive from Theorem~\ref{ddineq} an estimate for Parseval frames, which will then be applied to the complementary-dimension problem.
\begin{lemma}
\label{BukhCox}

Let $1<m < N.$  Let $U\in \mathbb{K}^{m\times N}$ such that $UU^*=I_m$ and let $t\in \mathbb{R}_+^{N}$ such that $\|t\|_2\leq1.$ Then 
\begin{equation}\label{LemmaInequalityR}
\sum_{i,j=1}^{N}t_it_j|\langle u_i, u_j\rangle|\leq\frac{2+(m-1)\sqrt{m+2}}{2(m+1)^2}\left(m+2+\left(\sum_{i=1}^N t_i\|u_i\| \right)^2\right), \;\; \textit{for } \; \mathbb{K}=\mathbb{R}
\end{equation}
and 
\begin{equation}\label{LemmaInequalityC}
\sum_{i,j=1}^{N}t_it_j|\langle u_i, u_j\rangle|\leq\frac{\sqrt{m+1}}{2} +\frac{1+(\tfrac{1}{2}m-1)\sqrt{m+1}}{m^2}\left(\sum_{i=1}^N t_i\|u_i\| \right)^2, \;\; \textit{for } \; \mathbb{K}=\mathbb{C}
\end{equation}
where  $u_i$  denotes the $i$-th column of the matrix $U.$
\end{lemma}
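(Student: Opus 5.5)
The plan is to reduce the lemma to the case $t=2$ of Theorem~\ref{ddineq}, used together with two consequences of the Parseval condition and a pointwise polynomial majorant of $|x|$ on $[0,1]$. Write $G=U^*U$, so that $G_{ij}=\langle u_j,u_i\rangle$ and $\|u_i\|^2=G_{ii}\le 1$. Since $G$ is an orthogonal projection of rank $m$, the Schur product $G\circ\overline G=(|G_{ij}|^2)$ is positive semidefinite with operator norm at most $1$. Hence
\[
Q_2:=\sum_{i,j=1}^N t_it_j|\langle u_i,u_j\rangle|^2\le \|t\|_2^2\le 1 .
\]
For indices with $u_i\neq0$ I would set $v_i=u_i/\|u_i\|$, which are unit vectors, and $x_{ij}=|\langle v_i,v_j\rangle|\in[0,1]$. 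Then $|\langle u_i,u_j\rangle|=\|u_i\|\|u_j\|x_{ij}$.

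The first step is to fix a quartic majorant
\[
p(x)=\alpha x^0+\beta x^2-\gamma x^4\ \ (\gamma>0)\quad\text{with}\quad x\le p(x)\ \text{on }[0,1].
\]
It is chosen to touch $|x|$ at $x=1$ and at the maximal-ETF angle $a$, where $a^2=\frac{1}{m+2}$ in the real case and $a^2=\frac1{m+1}$ in the complex case. This is the same choice that makes the bounds of Theorem~\ref{nMPC} sharp. I would multiply the pointwise bound by $t_it_j\|u_i\|\|u_j\|$, but distribute the powers of the norms so that each resulting sum is controllable:
\begin{itemize}
\item the $x^0$ term should yield $\alpha S^2$ with $S=\sum_i t_i\|u_i\|$;
\item the $x^2$ term should yield a multiple of $Q_2$;
\item the $x^4$ term should be expressed with weights $w_i=t_i\|u_i\|^2$, so that $\sum w_iw_jx_{ij}^2=Q_2$.
\end{itemize}
Because $\|u_i\|\le1$ and the degree-$4$ term enters with a negative sign, the rebalancing of norm powers only loosens the bound in the favourable direction. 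This is the step I would check first.

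Next I would apply Theorem~\ref{ddineq} with $t=2$ to the unit vectors $v_i$ and the weights $w_i$ (the theorem allows arbitrary real weights). This gives
\[
\sum w_iw_j x_{ij}^4\ \ge\ \tfrac{3}{m+2}\,Q_2\ \ \text{(real case)},\qquad \sum w_iw_j x_{ij}^4\ \ge\ \tfrac{2}{m+1}\,Q_2\ \ \text{(complex case)}.
\]
Hence the negative quartic contribution is at most $-\gamma\cdot\frac{3}{m+2}Q_2$, respectively $-\gamma\cdot\frac{2}{m+1}Q_2$. This is exactly where the recursive form is needed: the classical Sidelnikov--Welch bound would replace the fourth-power sum by $c_2(m,\mathbb K)(\sum w_i)^2$ and would lose the link to $Q_2$. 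The resulting total bound has the form $\alpha S^2+\kappa Q_2$. If $\kappa\ge 0$, the estimate $Q_2\le1$ turns it into $\kappa+\alpha S^2$, which is the shape of \eqref{LemmaInequalityR} and \eqref{LemmaInequalityC}. In the real case the constant and the $S^2$ coefficient share the factor $\frac{2+(m-1)\sqrt{m+2}}{2(m+1)^2}$, which suggests the parametrisation.

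The final step is to solve for $\alpha,\beta,\gamma$ from the tangency conditions (touching at $a$, value $1$ at $x=1$) and verify that $\kappa$ and $\alpha$ equal the stated coefficients, namely $\frac{(m+2)(2+(m-1)\sqrt{m+2})}{2(m+1)^2}$ and $\frac{2+(m-1)\sqrt{m+2}}{2(m+1)^2}$ in the real case, and $\frac{\sqrt{m+1}}2$ and $\frac{1+(m/2-1)\sqrt{m+1}}{m^2}$ in the complex case. I expect the main obstacle to be the bookkeeping of the norm factors in the second step. The $x^0$ term wants weights $t_i\|u_i\|$ while the Welch step wants $t_i\|u_i\|^2$, and the pointwise inequality must stay valid for all $\|u_i\|,\|u_j\|\in[0,1]$, not only for unit vectors. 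If a single fixed $p$ does not survive this, I would keep a free parameter in $p$, check the resulting two-variable inequality in $(x,\|u_i\|\|u_j\|)$ directly, and only then fix the parameter by matching the claimed constants.
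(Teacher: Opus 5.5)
\textbf{There is a genuine gap in the norm bookkeeping.} Your quartic majorant is the right one, and so is Theorem~\ref{ddineq} with $t=2$. Indeed, the paper's pointwise inequality $(x-\varphi)^2\ge (x^2-\varphi^2)^2/(1+\varphi)^2$ on $[0,1]$, with $\varphi=a$, is exactly $x\le p(x)$ for the unique quartic with $p(a)=a$, $p'(a)=1$, $p(1)=1$. The gap is that, as proposed, your chain cannot reach the stated constants.

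Multiplying $x_{ij}\le p(x_{ij})$ by $t_it_j\|u_i\|\|u_j\|$ produces the quadratic term $\beta\,t_it_j|\langle u_i,u_j\rangle|^2/(\|u_i\|\|u_j\|)$. Since $\beta>0$, you cannot replace it by $\beta\,t_it_j|\langle u_i,u_j\rangle|^2$. Your remark that rebalancing ``only loosens the bound'' holds for the quartic term, not for the quadratic one.

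A two-variable inequality does not rescue your split either. With $\nu=\|u_i\|\|u_j\|$ you would need $\nu x\le\alpha\nu+\nu^2(\beta x^2-\gamma x^4)$. On a diagonal term ($x=1$, $\nu=\|u_i\|^2$, which can be arbitrarily small) this forces $\alpha\ge1$ as $\nu\to0$. For your particular $p$ it already fails for every $\nu<1$, since it reads $(1-\alpha)(1-\nu)\le0$. But the target coefficient of $S^2$ is $<1$.

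The only valid form of your chain is $A\le\alpha S^2+\beta\widetilde Q_2-\gamma c\,Q_2$. Here $A$ is the left-hand side, $c$ is the Welch constant, and $\widetilde Q_2:=\sum_{i,j}t_it_j|\langle u_i,u_j\rangle|^2/(\|u_i\|\|u_j\|)\ge Q_2$. This is strictly weaker than the lemma. For an equal-norm Parseval frame with $t_i=N^{-1/2}$ one has $\widetilde Q_2=1$ but $Q_2=m/N$, so you lose $\gamma c(1-m/N)>0$.

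\textbf{The fix is to keep a single normalisation throughout.} This is what the paper does.
\begin{enumerate}[i)]
\item Apply Theorem~\ref{ddineq} to the unit vectors $v_i$ with weights $w_i=t_i\|u_i\|$, not $t_i\|u_i\|^2$. Then $\sum_{i,j}w_iw_jx_{ij}^4\ge c\sum_{i,j}w_iw_jx_{ij}^2=c\,\widetilde Q_2$, so the quartic term is matched to the same quadratic sum that carries the coefficient $\beta$.
\item Bound $\widetilde Q_2$, not $Q_2$. By Cauchy--Schwarz and $\sum_j|\langle u_i,u_j\rangle|^2=(G^2)_{ii}=G_{ii}=\|u_i\|^2$, one gets $\widetilde Q_2\le\sum_i\frac{t_i^2}{\|u_i\|^2}\sum_j|\langle u_i,u_j\rangle|^2=\|t\|_2^2\le1$.
\end{enumerate}
This yields $A\le\alpha S^2+(\beta-\gamma c)$ with $\beta-\gamma c>0$. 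With your tangency choice of $\alpha,\beta,\gamma$, these two coefficients are exactly the constants in \eqref{LemmaInequalityR} and \eqref{LemmaInequalityC}. Your Schur-product estimate $Q_2\le1$ is correct, but it controls the wrong quantity.
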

{\sc Proof.} Let $U$ and $t$ be as in the assumptions. Observe that we can assume that all columns of matrix $U$ are nonzero. If not, then we can  form a new matrix by keeping only the nonzero columns and define a new vector $t$ by keeping the coordinates corresponding to those columns, the sums in \eqref{LemmaInequalityR} and \eqref{LemmaInequalityC} remain unchanged, while the resulting matrix and the reduced vector $t$ still satisfy all the assumptions of the lemma. Now define two constants depending on the field $\mathbb{K}$
\begin{equation}\label{const}
    \varphi=\begin{cases}
        \frac{1}{\sqrt{m+2}}& for \;\; \mathbb{K}=\mathbb{R}\\
        \frac{1}{\sqrt{m+1}}& for \;\; \mathbb{K}=\mathbb{C}
    \end{cases}
\;\;\;and\;\;\; 
 c=\begin{cases}
        \frac{3}{m+2}& for \;\; \mathbb{K}=\mathbb{R}\\
        \frac{2}{m+1}& for \;\; \mathbb{K}=\mathbb{C}
    \end{cases}
\end{equation}
Combining the Cauchy-Schwarz inequality and Theorem \ref{ddineq}  we get
\begin{align*}
\sum_{i,j=1}^N &t_it_j\frac{(|\langle u_i,u_j\rangle|-\varphi
\|u_i\|\|u_j\|)^2}{\|u_i\|\|u_j\|}
=\sum_{i,j=1}^N t_it_j\frac{(|\langle u_i,u_j\rangle|^2-\varphi^2\|u_i\|^2\|u_j\|^2)^2}{\|u_i\|\|u_j\|(|\langle u_i,u_j\rangle|+\varphi\|u_i\|\|u_j\|)^2}\\
&\geq \sum_{i,j=1}^Nt_it_j \frac{(|\langle u_i,u_j\rangle|^2-\varphi^2\|u_i\|^2\|u_j\|^2)^2}{(1+\varphi)^2\|u_i\|^3\|u_j\|^3} = \left(\sum_{i,j=1}^N \left|\left\langle \frac{u_i}{\|u_i\|}, \frac{u_j}{\|u_j\|} \right\rangle\right|^4\|t_iu_i\|\|t_ju_j\|\right.\\
&\left.-2\varphi^2 \sum_{i,j=1}^Nt_it_j\frac{|\langle u_i, u_j \rangle|^2}{\|u_i\|\|u_j\|}+\varphi^4\sum_{i,j=1}^Nt_it_j\|u_i\|\|u_j\|\right)(1+\varphi)^{-2}  \\
&\geq\left((c-2\varphi^2) \sum_{i,j=1}^Nt_it_j\frac{|\langle u_i, u_j \rangle|^2}{\|u_i\|\|u_j\|}+\varphi^4 \big(\sum_{i=1}^Nt_i\|u_i\|\big)^2\right)(1+\varphi)^{-2}
\end{align*}
Squaring the addends of the left-sided sum and rearranging the latter inequality gives 
\begin{align}\label{inq2}\notag
    2\varphi\sum_{i,j=1}^{N}t_it_j|U^*U|_{ij} &\leq \Big(1+\frac{2\varphi^2-c}{(1+\varphi)^2}\Big)\sum_{i,j=1}^Nt_it_j\frac{|\langle u_i, u_j \rangle|^2}{\|u_i\|\|u_j\|} \\
    &+
    \big(\sum_{i=1}^Nt_i\|u_i\|\big)^2\varphi^2\left(1-\frac{\varphi^2}{(1+\varphi)^2}\right)
\end{align}
Now observe that using the Cauchy-Schwarz inequality and the tightness of the vectors $u_1,\dots,u_N$, we have
\begin{align*}
    \sum_{i,j=1}^{N}t_it_j\frac{|\langle u_i, u_j\rangle|^2}{\|u_i\|\|u_j\|}&\leq \sqrt{\sum_{i,j=1}^{N}\frac{t_i^2}{\|u_i\|^2}|\langle u_i, u_j\rangle|^2}\sqrt{\sum_{i,j=1}^{N}\frac{t_j^2}{\|u_j\|^2}|\langle u_i, u_j\rangle|^2}\\
    &=\sum_{i=1}^N\frac{t_i^2}{\|u_i\|^2}\sum_{j=1}^{N}|\langle u_i,u_j\rangle|^2
    =\sum_{i=1}^Nt_i^2\leq1.
\end{align*}
One can easily see that $1+\frac{2\varphi^2-c}{(1+\varphi)^2} >0$ in both cases ($\mathbb{K}=\mathbb{R}$ and $\mathbb{K}=\mathbb{C}$). So using the latter and substituting the corresponding values of $\varphi$ and $c$ from \eqref{const} yields the desired upper bound.\\

\noindent Observe that Lemma~\ref{BukhCox}, together with the elementary estimate
\begin{align}
\label{mn}
\left(\sum_{i=1}^{N}t_i\|u_i\|\right)^2
\leq
\sum_{i=1}^N t_i^2\sum_{i=1}^{N}\|u_i\|^2
=
{\rm tr}(U^*U)
=
{\rm tr}(UU^*)
=
m,
\end{align}
immediately yields Theorem~\ref{nMPC}.\\
We now use Lemma~\ref{BukhCox}  in the complementary-dimension setting to prove our main result.

\begin{thm}
Assume that for $m\geq 2$, there exists a maximal ETF in $\mathbb{K}^m.$ Let $M_\mathbb{R}=\frac{m(m+1)}{2}$ and $M_\mathbb{C}=m^2.$ Then for $k\geq1$ 
$$\lambda_{\mathbb{K}}(kM_{\mathbb{K}}-m,kM_{\mathbb{K}})= \lambda_{\mathbb{K}}(m)-\frac{2m}{kM_{\mathbb{K}}}+1.$$ Moreover, this value is attained by an equiangular tight frame when
$k=1$, and by a biangular tight frame when $k\geq 2$.
\end{thm}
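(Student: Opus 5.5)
We argue through the Naimark complement: reduce $\lambda_{\mathbb K}(N-m,N)$ with $N=kM_{\mathbb K}$ to a quantity about Parseval frames in $\mathbb K^m$, bound that quantity with Lemma~\ref{BukhCox}, and match the bound with an explicit frame.

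\emph{Reduction.} By Theorem~\ref{lammbda} we must maximize $\sum_{i,j}t_it_j|U^*U|_{ij}$ over $U\in\mathbb K^{(N-m)\times N}$ with $UU^*={\rm I}_{N-m}$ and $t\in\mathbb R^N_+$, $\|t\|=1$. Complete the rows of $U$ to a unitary $N\times N$ matrix. The remaining $m$ rows form $V\in\mathbb K^{m\times N}$ with $VV^*={\rm I}_m$ and $U^*U+V^*V={\rm I}_N$. Write $v_i$ for the columns of $V$. Then $|U^*U|_{ij}=|V^*V|_{ij}$ for $i\neq j$, and $(U^*U)_{ii}=1-\|v_i\|^2$. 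Using $\sum_i t_i^2=1$ this gives
\begin{equation*}
\sum_{i,j=1}^N t_it_j|U^*U|_{ij}=\sum_{i,j=1}^N t_it_j|\langle v_i,v_j\rangle|+1-2\sum_{i=1}^N t_i^2\|v_i\|^2 .
\end{equation*}

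\emph{Upper bound.} Put $S=\sum_i t_i\|v_i\|$. By Cauchy--Schwarz, $S^2\le N\sum_i t_i^2\|v_i\|^2$, so $-2\sum_i t_i^2\|v_i\|^2\le -\frac{2}{N}S^2$. Apply Lemma~\ref{BukhCox} to $V$.

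In the real case let $A=\frac{2+(m-1)\sqrt{m+2}}{2(m+1)^2}$. The objective is then at most
\begin{equation*}
A(m+2)+1+\Bigl(A-\tfrac{2}{N}\Bigr)S^2 .
\end{equation*}
In the complex case let $B=\frac{1+(\frac12 m-1)\sqrt{m+1}}{m^2}$. The objective is at most
\begin{equation*}
\tfrac{\sqrt{m+1}}{2}+1+\Bigl(B-\tfrac{2}{N}\Bigr)S^2 .
\end{equation*}
By \eqref{mn}, $S^2\le m$. If the coefficient of $S^2$ is nonnegative, we may substitute $S^2=m$. Using Theorem~\ref{jakis}, one checks that $A(2m+2)=\lambda_{\mathbb R}(m)$ and $\frac{\sqrt{m+1}}2+Bm=\lambda_{\mathbb C}(m)$. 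The bound then becomes exactly $\lambda_{\mathbb K}(m)-\frac{2m}{N}+1$.

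So everything hinges on the inequality $N=kM_{\mathbb K}\ge 2/A$, respectively $2/B$. Since $A\sim\frac12 m^{-3/2}$ and $B\sim\frac12 m^{-3/2}$, while $M_{\mathbb K}$ grows like $m^2$, this holds for all but finitely many small $(m,k)$.

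\emph{Main obstacle: the exceptional small cases.} This is where I expect the difficulty. For $k=1$ no computation is needed. The Naimark complement of a maximal ETF$(m,M_{\mathbb K})$ is an ETF$(M_{\mathbb K}-m,M_{\mathbb K})$. Theorem~\ref{SF} then gives $\lambda_{\mathbb K}(M_{\mathbb K}-m,M_{\mathbb K})=\delta_{M_{\mathbb K}-m,M_{\mathbb K}}$, and direct algebra shows this equals the claimed formula. For $k\ge2$ I would check $kM_{\mathbb K}\ge 2/A$ (or $2/B$) numerically for the few small dimensions where maximal ETFs exist ($m=2,3$ real; $m=2,3,\dots$ complex). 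For any residual case I would fall back on a sharper estimate of $\sum_i t_i^2\|v_i\|^2$ or on the known values quoted in the introduction.

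\emph{Lower bound and attainment.} Let $(e_1,\dots,e_M)$, $M=M_{\mathbb K}$, be the maximal ETF, scaled so that it is a Parseval frame: $\|e_l\|^2=m/M$. Take $k$ copies of it, each scaled by $1/\sqrt k$. This is a Parseval frame $V$ of $N=kM$ vectors in $\mathbb K^m$, each with $\|v_i\|^2=m/N$. Take weights $t_i=N^{-1/2}$; then $S^2=m$ and $\sum_i t_i^2\|v_i\|^2=m/N$.

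All equality conditions are then met. The Cauchy--Schwarz step for $t_i^2\|v_i\|^2$ is tight because the norms are equal. In Lemma~\ref{BukhCox}, the off-diagonal $|\langle v_i,v_j\rangle|$ take only the values $\|v_i\|\|v_j\|$ (copies) and $\varphi\|v_i\|\|v_j\|$ (distinct ETF vectors), and the 2-design identity \eqref{lowerconditions} gives equality in Theorem~\ref{ddineq} for $t=2$. Rather than tracking these conditions one by one, I would simply compute $\sum_{i,j}t_it_j|U^*U|_{ij}$ from the explicit Gram entries of $V$ and verify it equals $\lambda_{\mathbb K}(m)-\frac{2m}{N}+1$.

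The Naimark complement $U$ has unit-norm-equal columns. For $k=1$ its off-diagonal moduli all equal the ETF value, so it is an ETF. For $k\ge2$ they take exactly two values, namely those coming from repeated vectors and from distinct ETF vectors. Hence $U$ is a biangular tight frame for $k\ge2$, as claimed.
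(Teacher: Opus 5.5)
Your proposal is correct and follows the paper's proof essentially step by step: Naimark complement, Cauchy--Schwarz on $\sum_i t_i^2\|v_i\|^2$, Lemma~\ref{BukhCox} with \eqref{mn}, a sign check of the $S^2$ coefficient, and $k$ rescaled copies of the maximal ETF for attainment. Your treatment of every $k=1$ case through Theorem~\ref{SF} and the identity $\delta_{N-m,N}=\delta_{m,N}-\frac{2m}{N}+1$ is a tidy uniform replacement for the paper's case-by-case values.

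To remove your hedge for $k\ge2$: the only residual case is $\mathbb K=\mathbb R$, $m=k=2$. There $A-\frac{2}{N}=-\frac19<0$, so the lemma-based bound does not close, and one must invoke $\lambda_{\mathbb R}(4,6)=\frac53$ from the literature, exactly as the paper does. Incidentally, $A,B\sim\frac12 m^{-1/2}$ rather than $m^{-3/2}$, which only strengthens your asymptotic claim.
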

{\sc Proof.} Let $d:=k M_\mathbb{K}-m$ and $N=kM_\mathbb{K}.$ Fix matrix $V\in\mathbb{K}^{d\times N},$ such that $VV^{*}=I_{d},$ and  unit vector $t\in \mathbb{R}_{+}^{N}.$ Since the rows of $V$ form an orthonormal system in $\mathbb{K}^{N},$ we may extend them to an orthonormal basis of $\mathbb{K}^{N}.$ The remaining $m$ basis vectors can be taken as the rows of a matrix $U\in\mathbb{K}^{m\times N}.$ Then

\begin{equation}
    V^*V=I_N-U^*U \;\;\; and \;\;\; UU^*=I_m
\end{equation}
Since $U^*U$ is an orthogonal projection $\|u_i\|^2=|\langle e_i,U^*Ue_i\rangle|\leq 1,$ where $u_i$ denotes i-th column of matrix $U.$ 
Observe also that using the Cauchy-Schwarz inequality we have
\begin{equation*}
  \left(\sum_{i=1}^{N}t_i\|u_i\|\right)^2=\langle[1,\dots,1], [t_i\|u_1\|,\dots, t_{N}\|u_N\|]\rangle^2  \leq N\sum_{i=1}^{N}t_i^2\|u_i\|^2 
\end{equation*}
and consequently
\begin{align*}
    \sum_{i,j=1}^{N} t_it_j|\langle v_i,v_j\rangle|&= \sum_{\substack{i,j=1 \\ i\neq j}}^{N} t_it_j|\langle u_i,u_j\rangle|+\sum_{i=1}^{N}t_i^2(1-\|u_i\|^2)\\
    &= 1 +\sum_{i,j=1}^{N} t_it_j|\langle u_i,u_j\rangle|-2\sum_{i=1}^{N}t_i^2\|u_i\|^2\\
    &\leq 1 +\sum_{i,j=1}^{N} t_it_j|\langle u_i,u_j\rangle|-\frac{2}{N}\left(\sum_{i=1}^{N}t_i\|u_i\|\right)^2
\end{align*}
Now, using inequality \eqref{mn} and Lemma \ref{BukhCox} we obtain for the real case
\begin{align*}
    \sum_{i,j=1}^{N} t_it_j|\langle v_i,v_j\rangle|&\leq 1+\frac{(2+(m-1)\sqrt{m+2})(m+2)} {2(m+1)^2}\\
    &+\left(\frac{2+(m-1)\sqrt{m+2}} {2(m+1)^2}-\frac{2}{N} \right)\left(\sum_{i=1}^N t_i\|u_i\| \right)^2\\
    &\leq 1+ \frac{2}{m+1}\left(1+\frac{m-1}{2}\sqrt{m+2}\right)-\frac{2m}{kM_{\mathbb{R}}}=1+\lambda_{\mathbb{R}}(m)-\frac{2m}{kM_{\mathbb{R}}}
\end{align*}
Since the above estimate holds for every $V$ and $t$, we obtain
$\lambda_\mathbb{R}(d,N) \leq 1+\lambda_{\mathbb{R}}(m)-\frac{2m}{kM_{\mathbb{R}}}$
In the above calculations, we are using the fact that 
$$
\frac{2+(m-1)\sqrt{m+2}} {2(m+1)^2}-\frac{2}{N} =\frac{1}{(m+1)^2}\left(1+\frac{m-1}{2}\sqrt{m+2}-\frac{4}{k}(1+\frac{1}{m})\right)\geq0
$$
Among the dimensions $m$ for which a maximal real ETF exists, the
above expression is nonnegative, with only the following exceptions:
\begin{itemize}
    \item $m=2,$ $k=1.$ Then
    \begin{equation*}
        \lambda_\mathbb{R}(d,N)=\lambda_\mathbb{R}(1,3)=1=\lambda_{\mathbb{R}}(m)-\frac{2m}{kM_\mathbb{R}}+1
    \end{equation*}
    \item $m=2,$ $k=2.$ Then by result of Basso \cite{B}
    \begin{equation*}
        \lambda_\mathbb{R}(d,N)=\lambda_\mathbb{R}(4,6)=\frac{5}{3}=\lambda_{\mathbb{R}}(m)-\frac{2m}{kM_\mathbb{R}}+1.
    \end{equation*}
     \item $m=3,$ $k=1.$ Then 
    \begin{equation*}
        \lambda_\mathbb{R}(d,N)=\lambda_\mathbb{R}(3,6)=\lambda_{\mathbb{R}}(m)=\lambda_{\mathbb{R}}(m)-\frac{2m}{kM_\mathbb{R}}+1.
    \end{equation*}
\end{itemize}
Analogously for complex case we get  
\begin{align*}
   \sum_{i,j=1}^{N} t_it_j|\langle v_i,v_j\rangle|&\leq 1+\frac{\sqrt{m+1}} {2}
    +\left(\frac{1+(\tfrac{1}{2}m-1)\sqrt{m+1}} {m^2}-\frac{2}{N} \right)\left(\sum_{i=1}^N t_i\|u_i\| \right)^2\\
    &\leq 1+ \frac{1}{m}\left(1+(m-1)\sqrt{m+1}\right)-\frac{2m}{kM_{\mathbb{C}}}=1+\lambda_{\mathbb{C}}(m)-\frac{2m}{kM_{\mathbb{C}}}
\end{align*}
Since the above estimate holds for every $V$ and $t$, we obtain
$\lambda_\mathbb{R}(d,N) \leq 1+\lambda_{\mathbb{C}}(m)-\frac{2m}{kM_{\mathbb{R}}}$
This time we are using the fact that 
\begin{equation*}
    \frac{1+(\tfrac{1}{2}m-1)\sqrt{m+1}} {m^2}-\frac{2}{N} =\frac{1}{m^2}\left(1+(\tfrac{1}{2}m-1)\sqrt{m+1}-\frac{2}{k}\right)
\geq 0\end{equation*}
It is always true except for $m=2$ and $k=1,$ but then
\begin{equation*}
    \lambda_\mathbb{C}(d,N)=\lambda_\mathbb{C}(2,4)=\lambda_\mathbb{C}(m)=\lambda_\mathbb{C}(m)-\frac{2m}{kM_\mathbb{C}}+1.
\end{equation*}
We next construct a biangular tight frame that attains the above upper bound.
 Let $U$ be a matrix for which the value $\lambda_\mathbb{K}(m)$ is attained (generated by a maximal ETF in $\mathbb{K}^m$). Then the matrix $U_k:=\sqrt{\frac{1}{k}}\underbrace{[U|\ldots |U]}_{\text{k times}}$ satisfies $U_kU_k^\ast ={\rm I}_m.$ Let $V$ be a matrix whose rows form an orthonormal completion of the rows of $U_k.$ Since
$${\rm I}_{N}=U_k^*U_k+V^*V,$$
 we obtain
 \begin{equation*}
 \begin{aligned}
   V^*V&=  {\rm I}_{N}-\frac{1}{k}  \left[\begin{array}{c|c|c}
    U^*U  & \dots &  U^*U \\
      \hline
         \vdots& \ddots & \vdots\\
      \hline   
      U^*U  & \dots &  U^*U
    \end{array}\right]
    \end{aligned}
    \end{equation*}
 Since the columns of $U$ have equal norms, $V$ is an equal-norm tight frame. Moreover, for distinct columns of $V$, the identity above shows that their absolute inner products take  the values $|\langle u_i, u_j\rangle|$ for $k=1$ and the values
$$
\frac{1}{k}\|u_i\|^2
\qquad\text{or}\qquad
\frac{1}{k}|\langle u_i,u_j\rangle|,\quad i\neq j \quad for \quad k\geq2.
$$
Since $U$ is generated by an ETF, these give exactly one value for $k=1$ and two values for
$k\geq2$. Consequently

$$
\frac{1}{N}\sum_{i,j=1}^{N}|\langle v_i,v_j\rangle|=\frac{k}{N}\sum_{i,j=1}^{M_\mathbb{K}} |\langle u_i,u_j\rangle|  +1-\frac{2}{N}\sum_{i=1}^{M_\mathbb{K}}\|u_i\|^2=\lambda_\mathbb{K}(m)+1-\frac{2m}{kM_\mathbb{K}},
$$
as required.

\end{document}